\newtheorem{theorem}{Theorem}[section]
\newtheorem{proposition}[theorem]{Proposition}
\newtheorem{lemma}[theorem]{Lemma}
\newtheorem{corollary}[theorem]{Corollary}
\newenvironment{acknowledgements}{\par\bigskip\noindent\textbf{Acknowledgement}
\hskip 1mm}{\smallskip}
\newenvironment{question}{\bigskip\noindent\textbf{Question}
\hskip 1mm}{\smallskip}
\newenvironment{remark}{\bigskip\noindent\textbf{Remark}
\hskip 1mm}{\smallskip}
\DeclareMathOperator{\Aut}{Aut}
\DeclareMathOperator{\End}{End}
\DeclareMathOperator{\soc}{soc}
\DeclareMathOperator{\core}{core}
\title{A class of finite simple Bol loops of exponent 2} 
\thanks{This paper was written during the author's Marie Curie Fellowship
MEIF-CT-2006-041105.}  
\author{G\'abor P. Nagy} 
\email{nagyg@math.u-szeged.hu}
\address{Bolyai Institute, University of Szeged, Aradi v\'ertan\'uk tere 1,
  H-6720 Sze\-ged (Hungary)}
\curraddr{Mathematisches Institut, Universit\"at W\"urzburg, Am Hubland, D-97070
  W\"urzburg (Germany)}
\subjclass[2000]{Primary 20N05; Secondary 20C20, 20F29}
\begin{document}

\begin{abstract}
In this paper we give an infinite class of finite simple right Bol loops of
exponent 2. The right multiplication group of these loops is an extension of an
elementary Abelian $2$-group by $S_5$. The construction uses the description of
the structure of such loops given by M. Aschbacher \cite{Aschbacher1}. These
results answer some questions of M. Aschbacher.
\end{abstract}

\maketitle

\section{Introduction} \label{sec:Intro}

The set $Q$ endowed with a binary operation $x\cdot y=xy$ is a
\emph{loop} if it has a unit element $1$ and the equation $xy=z$ has a unique
solution whenever two of the three unknowns are given. The loop $Q$ is a
\emph{(right) Bol loop} if the identity 
\[((xy)z)y=x(y(zy))\]
holds for all $x,y,z \in Q$. Bol loops of exponent $2$ play an important role
in the theory of loops and are related to interesting group theoretical
problems. Using the so called \emph{Baer correspondence,} this class of loops
can be described by a triple $(G,H,K)$ with group $G$, core-free subgroup $H\leq
G$ and system of coset representatives $K\subseteq G$ such that $1\in K$,
$G=\langle K \rangle$ and $K\setminus \{1\}$ is a union of conjugacy classes of
involutions.

Bol loops of exponent $2$ which are not elementary Abelian groups have long been
known to exist, the first construction is due to R. P. Burn \cite{Burn}. Later,
many infinite classes of such loops were given, see \cite{Kiechle, KiechleNagyG,
KolbKreuzer, NagyGfrattini}. All of these examples were solvable loops;
equivalently the group $G$ was a $2$-group. The existence of nonsolvable finite
Bol loops of exponent $2$ was considered as one of the main open problem in the
theory of loops and quasigroups. As the smallest such loop must be simple, this
question was related to the existence of finite simple proper right Bol loops. 
Here by proper we mean right Bol loops which are not Moufang, that is, which do
not satisfy the identity $x(yx)=(xy)x$. Finite simple proper Bol loops were
constructed by the author \cite{NagyGsimplebol} recently.

By \cite{NagyGsol}, the solvability of a Bol loop of $2$-power exponent is
equivalent to having $2$-power order. Later, S. Heiss \cite{Heiss} showed that
the solvability of the loop corresponding to the triple $(G,H,K)$ is equivalent
with the solvability of the group $G$. The next major step was the paper
\cite{Aschbacher1} by M. Aschbacher. His main result gives a detailed
description on the structure of the right multiplication group
of minimal nonsolvable Bol loops of exponent $2$. This result was achieved by
using the classification of finite simple groups. 

In this paper we apply Aschbacher's recipe to construct a class of finite simple
Bol loops of exponent $2$. In this way, we give a negative answer to questions 2
and 3 of \cite{Aschbacher1} and \cite{AschbKinyPhil}. The smallest member of our
class has order $96$. We emphasize that this example of is small and the
structural description of the smallest example in \cite{Aschbacher1} and
\cite{AschbKinyPhil} is so precise that it was only a matter of time that
somebody finds it by some short computer calculation. This explains the fact
that this loop was indepently discovered by the author and by B. Baumeister and
A. Stein \cite{BaumeisterStein} with a time delay of $10$ days. 

\begin{acknowledgements}
I would like to thank Michael Aschbacher (California Institute of Technology)
for his detailed comments on the previous version of this paper. He helped to
fill some gaps, to simplify the proofs and to replace computer based
calculations by theoretical arguments. I also would like to thank Peter M\"uller
(W\"urzburg) for many stimulating conversations and helpful comments.
\end{acknowledgements}

\section{Basic concepts}

In order to make this paper self-contained, we define the basic loop theoretical
concepts. Subloops, homomorphisms and normal subloops can be defined for the
class of loops similarly to groups, see \cite{Bruck}. The loop $Q$ is said to be
\emph{simple} if it has no proper normal subloops. 

For a loop $Q$, the maps $L_a,R_a:Q\to Q$, $L_a(x)=ax$, $R_a(x)=xa$ are the
\emph{left} and \emph{right multiplication maps} of $Q$. They are bijections of
$Q$ and generate the \emph{left} and \emph{right multiplication groups} of $Q$,
respectively. The \emph{section} $S(Q)$ of the loop $Q$ is the set of the right
multiplication maps of $Q$. 

The right Bol identity can equivalently be expressed by $R_xR_yR_x \in S(Q)$ for
all $x,y\in Q$. Bol loops are \emph{power-associative}, that is, $x^n$ is well
defined for all $x\in Q$ and $n \in \mathbb Z$. The \emph{order} of the element
$x$ is the smallest positive integer $n$ such that $x^n=1$. The \emph{exponent}
of $Q$ is the smallest positive integer $n$ such that $x^n=1$ for each $x\in
Q$. 

We will formulate our results by using the \emph{Baer correspondence} between
the class of loops and the class of triples $(G,H,K)$ where $G$ is a group,
$H\leq G$ and $K$ is a set of coset representatives of all conjugates $H^g$ in
$G$, and $1 \in K$. For a given loop $Q$, $G$ can be chosen to be the right
multiplication group, $H$ the stabilizer of the unit element and $K$ the set of
right multiplication maps. Conversely, for a given triple $(G,H,K)$, a loop can
be constructed in the following way. We define the operation $x\circ y$ on $K$
by $H(x\circ y)=Hxy$; then $Q=(K,\circ)$ is a loop. The triple satisfying the
above condition will be said to be a \emph{loop folder}. 

The loop folder $(G,H,K)$ determines a Bol loop of exponent $2$ if and only if
$K=\{1\}\cup \bigcup_{i \in I}C_i$, where the $C_i$'s are conjugacy classes of
involutions in $G$.

\section{The first construction} \label{sec:First}

As usual, $S_5$ and $PGL(2,5)$ are the permutation groups acting on $5$ and $6$
points, respectively. It is well known that $S_5 \cong PGL(2,5) \cong
\Aut(L_2(4)) \cong O_4^-(2)$, where $\Aut(L_2(4))$ is the extension of
$L_2(4)=PSL(2,4)\cong A_5$ by a field automorphism of order $2$, and $O_4^-(2)$
is the orthogonal group on a $4$-dimensional orthogonal space over $\mathbb F_2$
of Witt index $1$. We denote by $F_{20}$ the affine linear group acting on
$\mathbb{F}_5$, $F_{20} \cong C_5 \rtimes C_4$. On the one hand, $F_{20}$ is the
Borel subgroup of $PGL(2,5)$, that is, the stabilizer of a projective point. On
the other hand, $F_{20} \leq S_5$ is a sharply $2$-transitive permutation group
on $5$ points. 

In the sequel, we define a group $G$ which is a nonsplit extension of the
elementary Abelian group of order $32$ by $S_5$ such that the transpositions of
$S_5$ lift to involutions in $G$ and the even involutions of $S_5$ lift to
elements of order $4$. Despite the relatively small order of $G$, we found no
simple description for this group; therefore our definion will be 
rather \textit{ad hoc,} as well. We start with two technical lemmas. 

\begin{lemma} \label{lm:s5pres}
We have the following presentations of groups with generators and relations. 
\begin{eqnarray*}
A_5 &=&\langle a,b \mid a^2=b^3=(ab)^5=1 \rangle,\\
S_5 &=&\langle c,d \mid c^2=d^4=(cd)^5=[c,d]^3=1 \rangle,\\
2.S_5 &=&\langle C,D \mid C^2=D^8=(CD)^5=[C,D]^3=[C,D^4]=1 \rangle,
\end{eqnarray*}
where $2.S_5$ denotes the nonsplit central extension of $S_5$ in which the
transpositions lift to involutions. In other words, $2.S_5$ is the semidirect
product of $SL(2,5)=2.A_5$ with a group of order $2$. 
\end{lemma}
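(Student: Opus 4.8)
The plan is to verify each of the three presentations by the standard two-step method: first check that the named group satisfies the listed relations (so it is a quotient of the abstractly presented group), and then show that the abstractly presented group has order at most that of the named group (so the surjection is an isomorphism). For the first presentation, the relations $a^2=b^3=(ab)^5=1$ are the classical Coxeter-type presentation of $A_5\cong\langle(1\,2)(3\,4),(1\,3\,5)\rangle$, and this is a well-known fact that I would simply cite (for instance via the action on five points, or via the isomorphism $A_5\cong L_2(5)$); so I treat the $A_5$ presentation as a known starting point.

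For the $S_5$ presentation, I would first exhibit explicit permutations realizing $c$ and $d$: take $c$ to be a transposition and $d$ to be a $4$-cycle with $cd$ of order $5$, and check that $[c,d]$ then has order $3$, confirming that $S_5$ is a quotient. For the reverse inequality, I would set $G=\langle c,d\mid c^2=d^4=(cd)^5=[c,d]^3=1\rangle$ and argue that $|G|\le 120$. The cleanest route is to locate inside $G$ the subgroup $N=\langle d^2,\ [c,d]\rangle$ (or the normal closure of a suitable element) and use the relations to show that $G/N$ and $N$ have controlled orders; alternatively, since $c^2=d^4=1$ and the braid-type relation $[c,d]^3=1$ hold, one recognizes the presentation as a Coxeter/Artin-style presentation whose order can be bounded by a coset enumeration. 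I would organize this as: the element $e=d^2$ is an involution, the subgroup $\langle c,e\rangle$ is dihedral of controlled order by $(ce)^k=1$ (deduced from the relations), and then the cosets of $\langle c,e\rangle$ are enumerated using $d$. In practice the honest justification is a Todd--Coxeter coset enumeration verifying $|G|=120$, and I would state that the enumeration closes at index giving order $120$.

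For the central extension $2.S_5$, the strategy is identical but with the extra generator relation $D^8=1$ and the centrality relation $[C,D^4]=1$, which forces $z:=[C,D]^3\cdot\text{(appropriate word)}$ — more precisely, $D^4$ — to be a central involution. I would first confirm that $2.S_5$, realized as $SL(2,5)\rtimes C_2$, admits generators $C,D$ satisfying all five relations, by choosing $C$ a lift of a transposition to an involution and $D$ a lift of a $4$-cycle to an element of order $8$ projecting onto $d$; one checks $(CD)^5=1$, $[C,D]^3=1$, and $[C,D^4]=1$ directly, so $2.S_5$ is a quotient. For the order bound, I would show that $z=D^4$ is central of order $2$, that modulo $\langle z\rangle$ the relations degenerate exactly to the $S_5$ presentation above (since $D^8=1$ becomes $\bar D^4=1$ and the centrality relation becomes vacuous), so $\langle C,D\rangle/\langle z\rangle$ is a quotient of $S_5$ of order at most $120$; hence $|\langle C,D\rangle|\le 240$, matching $|2.S_5|=240$, and the extension is nonsplit because the transpositions lift to involutions while the claimed structure $SL(2,5)\rtimes C_2$ is forced.

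The main obstacle is the reverse order bound in the $S_5$ case — verifying $|G|\le 120$ from the abstract relations without simply invoking a machine coset enumeration. Establishing centrality of $D^4$ and the reduction to the $S_5$ presentation is then routine, so the crux is packaging the $S_5$ computation cleanly; I expect to present it as a short coset enumeration over the dihedral subgroup $\langle c,d^2\rangle$ and to remark that the relation $[c,d]^3=1$ is exactly what pins the order at $120$ rather than something larger.
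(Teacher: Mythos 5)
Your handling of the $A_5$ presentation (cite as classical) and of the $2.S_5$ presentation (note $D^4$ is central of order at most $2$, observe that modulo $\langle D^4\rangle$ the relations collapse to the $S_5$ presentation, bound the order by $240$, and pin down the isomorphism type by exhibiting $SL(2,5)\rtimes C_2$ as a quotient in which $D$ has order $8$) is sound and is essentially the paper's argument. The genuine gap is in the middle presentation, at precisely the step you yourself call ``the main obstacle'': the bound $|G|\le 120$ is never established. You offer two routes and carry out neither. The route via the dihedral subgroup $\langle c,d^2\rangle$ requires deducing $(cd^2)^k=1$ for some explicit $k$ from the relations, which you do not do (and which is not obvious a priori); the Todd--Coxeter route is left as an assertion that ``the enumeration closes'' --- an unexecuted computation is an IOU, not a proof. (An enumeration actually exhibited would be a legitimate, if unilluminating, alternative; the paper deliberately avoids it, its acknowledgements citing the goal of replacing computer calculations by theoretical arguments.)

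Ironically, you name exactly the right subgroup, $N=\langle d^2,[c,d]\rangle$, but omit the two short computations that make it work and that constitute the paper's proof. Put $a=d^2$ and $b=[d,c]=[c,d]^{-1}$. Then $a^2=b^3=1$ directly from the relations, and the key identity is $ab=d^2\cdot d^{-1}cdc=(dc)^2$, whence $(ab)^5=(dc)^{10}=1$ because $(dc)^5$ is conjugate to $(cd)^5=1$. So $N=\langle a,b\rangle$ is a quotient of the already-established presentation of $A_5$, giving $|N|\le 60$. Next, $dc=(dc)^6=(ab)^3\in N$, hence $cd=dc[c,d]=dcb^{-1}\in N$, and $dbd^{-1}=cdcd^{-1}=(cd)^2d^{-2}\in N$ while $dad^{-1}=a$; thus $d$ normalizes $N$, and since $c\in d^{-1}N$, we get $G=N\langle d\rangle$ with $d^2\in N$, so $|G|\le 2|N|\le 120$. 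Combined with the surjection $G\to S_5$ that you do verify, this proves $G\cong S_5$. Without these computations (or an actually executed enumeration), your proposal does not prove the $S_5$ presentation --- and both the $2.S_5$ case and the paper's later arguments rest on it.
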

\begin{proof} The presentation for $A_5$ is well known. Assume $G=\langle
c,d\rangle$ is the group presented by the second set of relations above. We
observe first that $S_5$ satisfies these relations, hence no relation can
collapse. Put $a=d^2$ and $b=[d,c]=[c,d]^{-1}$. Then $a^2=b^3=1$ and
$ab=(dc)^2=((cd)^2)^c$. This latter implies $(ab)^5=1$, hence $G_0=\langle a,b
\rangle \cong A_5$. Moreover, since $dc=(dc)^6=(ab)^3 \in G_0$, we have
$cd=dc[c,d]=dcb^{-1} \in G_0$ and
\[dbd^{-1}=cdcd^{-1}=(cd)^2 d^{-2} \in G_0.\]
This means that $d$ and $cd$ normalize $G_0$, so $G_0\lhd G$. So $|G:G_0|=2$ and
$d\not\in G_0$ since $A_5$ contains no element of order $4$. This proves $G\cong
S_5$. Finally, $D^4$ is a central involution in $H=\langle C,D \rangle$ and
$H/\langle D^4\rangle$ maps surjectively to $S_5$. The order of $D$ is $8$,
thus, the extension is nonsplit and the involution $C$ covers a transposition
in $S_5$. 
\end{proof}

\begin{lemma} \label{lm:25S5_constr}
The permutations 
\[\begin{array}{rcl}
c&=&(1,4)(2,9)(3,10)(6,11)(7,12)(13,21)(14,22)(15,24)(16,23)(17,30)\\
&&(18,29)(19,31)(20,32)(33,35)(38,40), \\
d&=&(1,2,4,6,8,7,5,3)(9,13,25,18,10,14,26,17)(11,15,27,20,12,16,\\
&& 28,19)(21,30,38 , 34,23,31,40,35)(22,32,39,36,24,29,37,33)
  \end{array}\]
acting on $40$ points satisfy the relations
\begin{equation} \label{eq:relators}
c^2=d^8=(cd)^5=[c,d]^3=[d^4,c]^2=[d^4,cdcd^{-2}c]=1.
\end{equation}
Moreover, with $u_1=d^4$, $u_2=u_1^c$, $u_3=u_1^{cd}$, $u_4=u_1^{cdc}$,
$u_5=u_1^{cdcd}$, $u_6=u_1^{cdcdc}$ the identity $u_1u_2u_3u_4u_5u_6=1$ holds. 
\end{lemma}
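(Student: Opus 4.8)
The statement is a verification lemma about two explicitly given permutations, so the backbone of any proof is direct computation; my plan is to organize that computation and to let the structure the relations encode carry as much of the load as possible. The order relations $c^2=d^8=1$ are immediate from the cycle types: $c$ is a product of $15$ disjoint transpositions, and $d$ is a product of five $8$-cycles covering all $40$ points. The relation $(cd)^5=1$ I would dispatch by forming the single permutation $cd$ and reading off that each of its cycles has length dividing $5$, and likewise $[c,d]^3=1$ reduces to checking that the commutator $[c,d]$ has order $3$.

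The two remaining relators are best read as statements about the involution $u_1=d^4$ and its conjugates. Since $u_1$ and $c$ are involutions, $[d^4,c]=u_1u_1^{c}=u_1u_2$, so $[d^4,c]^2=1$ is precisely the assertion that $u_1$ and $u_2$ commute; similarly I would extract from $[d^4,cdcd^{-2}c]=1$ the corresponding coincidence among conjugates. Verifying these, together with the transitivity of the action of $\langle c,d\rangle$ on $\{u_1,\dots,u_6\}$, I would conclude that the six conjugates are pairwise commuting involutions. They then generate an elementary abelian $2$-group $V$, and $e_i\mapsto u_i$ defines a surjection from the permutation $\mathbb F_2$-module on the six points onto $V$, whose kernel $R$ is a submodule.

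For the final identity $u_1u_2u_3u_4u_5u_6=1$ I would avoid multiplying out six products of twenty transpositions and argue on this module. Because $\langle c,d\rangle$ acts on $\{u_1,\dots,u_6\}$ as $S_5\cong PGL(2,5)$ on the projective line $PG(1,5)$, a single transitive orbit, the module has a unique minimal submodule, namely the line spanned by the all-ones vector $\mathbf 1$ (no transitive group fixes a nonconstant vector, and the $4$-dimensional heart cannot embed as a submodule for dimension reasons). Hence either the $u_i$ are independent, or $R\supseteq\langle\mathbf 1\rangle$, which is exactly the symmetric relation $\prod u_i=1$. The identity is therefore equivalent to the single fact $\dim_{\mathbb F_2}V=5$, i.e. $|\langle c,d\rangle|=3840$; I would secure this by computing the order of the permutation group generated by $c$ and $d$ (equivalently, by exhibiting one nontrivial dependence among the $u_i$), whereupon the module remark forces that dependence to be the symmetric one.

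The main obstacle is the bookkeeping in the two commutator relations $[d^4,c]^2=1$ and $[d^4,cdcd^{-2}c]=1$: each requires tracking $d^4$, a product of twenty transpositions, and its conjugate under a word of length up to five across all $40$ points, and it is here that a transcription or sign error is most likely to creep in. Once these are confirmed, the pairwise commuting of the $u_i$ and the passage to the module are formal, and the product relation follows from the dimension count with no further hand computation.
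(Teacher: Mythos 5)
Your treatment of the relations themselves coincides with the paper's (its entire proof is ``we leave the straightforward calculations to the reader''), and your reading of $[d^4,c]^2=1$ as $[u_1,u_2]=1$ and of $[d^4,cdcd^{-2}c]=1$ as $u_1^{cdc}=u_1^{cd^2}$ is exactly right. The genuine gap is in the module-theoretic detour you use for $u_1u_2\cdots u_6=1$: the claim that $\langle\mathbf 1\rangle$ is the \emph{unique} minimal submodule of the $\mathbb F_2$-permutation module on the six points is true, but your justification --- ``the $4$-dimensional heart cannot embed as a submodule for dimension reasons'' --- is not an argument. There is no dimensional obstruction to a $4$-dimensional submodule of a $6$-dimensional module; whether the heart $M$ embeds is precisely the question of whether the extension $0\to\langle\mathbf 1\rangle\to E\to M\to 0$ (with $E$ the sum-zero subspace) splits, and for transitive groups in defining characteristic such extensions \emph{do} sometimes split (e.g.\ $C_3$ on $3$ points over $\mathbb F_2$). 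Ruling it out here needs real input: for instance Frobenius reciprocity, $\mathrm{Hom}_{S_5}\bigl(M,\mathrm{Ind}_{F_{20}}^{S_5}\mathbb F_2\bigr)\cong \mathrm{Hom}_{F_{20}}\bigl(M|_{F_{20}},\mathbb F_2\bigr)=0$ because $M|_{F_{20}}$ is irreducible and nontrivial (this is what Lemma \ref{lm:S5modules}, via Mortimer's tables, supplies); or a self-duality argument: if $E$ split as $\langle\mathbf 1\rangle\oplus M$, then $\mathbb F_2^6/\langle\mathbf 1\rangle\cong E^*$ would contain a nonzero fixed vector, impossible since $A_5$ is already transitive on the six points. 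Note that the cheap checks do not save you: comparing fixed-point dimensions of elements of order $2,3,4,5$ on $E$ and on $\langle\mathbf 1\rangle\oplus M$ gives identical answers, so the splitting question genuinely requires one of the above arguments. Without it, your reduction of $\prod u_i=1$ to ``some dependence exists'' collapses.

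Two further points, less serious. To propagate $[u_1,u_2]=1$ to all pairs you need $2$-transitivity of the action on $\{u_1,\dots,u_6\}$, not mere transitivity, and establishing that this action is the $6$-point action of $S_5$ is itself a derivation from the relations --- it is the first half of the paper's proof of Lemma \ref{lm:25S5_struct} (computing that $c,d$ induce $(u_1u_2)(u_3u_4)(u_5u_6)$ and $(u_2u_3u_4u_5)$ and invoking the presentation of Lemma \ref{lm:s5pres}); your proposal treats this as given, though it is needed both for the commuting claim and for the module claim. Also, your fallback for producing a dependence --- computing $|\langle c,d\rangle|=3840$ by a stabilizer chain --- is, by hand, at least as laborious as tracing the $40$ points through the product $u_1u_2\cdots u_6$ directly; so the detour buys structural insight (it explains \emph{why} the only possible dependence is the symmetric one) rather than less computation. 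In outline your route is a genuinely different and attractive alternative to the paper's brute-force verification, but it is complete only once the socle claim is actually proved.
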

\begin{proof} We leave the straightforward calculations to the reader. 
\end{proof}

\begin{lemma} \label{lm:25S5_struct}
The group $G= \langle c,d \rangle$ given in Lemma \ref{lm:25S5_constr}
satisfies	
\begin{description}
\item[(*)] $G$ has an elementary Abelian normal subgroup $J$ of order $32$ such that
$G/J\cong PGL(2,5)$ and $J$ is the $\mathbb F_2$-permutation module modulo its
center. Moreover, $[G,G]/[G,J]\cong SL(2,5)$ and $G$ splits over $[G,G]J$.
\end{description}
\end{lemma}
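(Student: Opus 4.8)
The plan is to extract the subgroup $J$ explicitly from the data supplied in Lemma \ref{lm:25S5_constr} and then identify the quotient and the module structure. The natural candidate for $J$ is the normal closure of the central involution $u_1=d^4$ in $G$. The six conjugates $u_1,\dots,u_6$ listed in Lemma \ref{lm:25S5_constr} are the starting point: I would first show that these six involutions are exactly the $G$-conjugates of $u_1$ and that they generate $J$. The relation $[d^4,c]^2=1$ together with $[d^4,cdcd^{-2}c]=1$ from \eqref{eq:relators} should force the $u_i$ to commute pairwise, so that $J=\langle u_1,\dots,u_6\rangle$ is elementary Abelian. The single linear relation $u_1u_2u_3u_4u_5u_6=1$ then cuts the rank from $6$ down to $5$, giving $|J|=32$ precisely; I would verify independence of $u_1,\dots,u_5$ by a direct (but light) check on the permutations, or by invoking that $G/J$ must be a known group of the right order.

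Next I would analyse the action of $G$ on $J$ and the quotient $G/J$. Since the $u_i$ are permuted by conjugation, $G$ acts on the index set $\{1,\dots,6\}$, and one checks from the defining conjugation formulas $u_2=u_1^c$, $u_3=u_1^{cd}$, etc., that this action realizes the surjection $G\to\Sym(6)$; because $PGL(2,5)\cong S_6$'s point stabilizer acts $3$-transitively and $c,d$ satisfy the $PGL(2,5)$ relations modulo $J$, the image is $PGL(2,5)$ acting on the six conjugates as on the projective line. Here the identification $S_5\cong PGL(2,5)$ recorded in the text, and the presentation of $S_5$ from Lemma \ref{lm:s5pres}, let me confirm $|G/J|=120$ and hence $|G|=32\cdot 120$. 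The module $J$ is then the natural permutation module $\mathbb F_2^6$ modulo the all-ones vector fixed by the relation $\sum u_i=0$; saying ``$J$ is the permutation module modulo its center'' amounts to observing that the fixed subspace (the center of $J$ under the $G$-action, i.e.\ the radical of the permutation form) has been quotiented out by that single relation.

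For the remaining structural claims I would compute the derived series. The commutator $[G,G]$ maps onto $A_5\cong L_2(4)$ in $G/J$, and since the transpositions lift to involutions while even involutions lift to order-$4$ elements (as arranged in Lemma \ref{lm:s5pres} and visible from $d^4\neq 1$ being central of order $2$ over the element $d^2$ covering a $3$-cycle's companion), the preimage of $A_5$ in $[G,G]/[G,J]$ is forced to be the nonsplit cover $SL(2,5)=2.A_5$ rather than $A_5\times C_2$; this is exactly the $2.S_5$ phenomenon isolated in Lemma \ref{lm:s5pres}, so I would quote that lemma to pin down $[G,G]/[G,J]\cong SL(2,5)$. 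Finally the splitting of $G$ over $[G,G]J$: the quotient $G/[G,G]J\cong C_2$ is generated by the image of the transposition-covering involution $c$, and since $c^2=1$ in $G$, the subgroup $\langle c\rangle$ gives an explicit complement, so the extension splits.

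The main obstacle I expect is the module identification, namely proving that $J$ is \emph{exactly} rank $5$ (not smaller) and that it is elementary Abelian with the prescribed $G$-action; this requires confirming that $u_1,\dots,u_5$ are $\mathbb F_2$-independent and that no further collapsing occurs, which is where one must either trust the explicit $40$-point permutations of Lemma \ref{lm:25S5_constr} or supply an abstract argument that the permutation module of $PGL(2,5)$ on six points has a unique codimension-one submodule of the stated type. The derived-subgroup computation, by contrast, follows formally once the presentations of Lemma \ref{lm:s5pres} are in hand.
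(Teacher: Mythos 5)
Your proposal follows essentially the same route as the paper's proof: identify the six conjugates $u_1,\dots,u_6$ of $d^4$ from the relations, show they generate an elementary Abelian normal subgroup $J$ of order $32$ on which $G$ acts as $PGL(2,5)$ via the $S_5$ presentation of Lemma \ref{lm:s5pres}, obtain $[G,G]/[G,J]\cong SL(2,5)$ from the $2.S_5$ statement of that same lemma, and split off $\langle c\rangle$ over $[G,G]J$ since $c$ maps to an odd permutation. The paper is only slightly more explicit at the two spots you leave loose: it gets pairwise commutation from $[u_1,u_2]=[d^4,c]^2=1$ plus the $2$-transitivity of the action on $\{u_1,\dots,u_6\}$, and, like you, it ultimately relies on the explicit $40$-point permutations for the rank-$5$ independence (your alternative of ``invoking that $G/J$ has the right order'' would be circular, so keep the direct check).
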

\begin{proof} We claim that the conjugacy class of $u_1=d^4$ in $G$ is $X =
\{u_1,\ldots,u_6\}$. It is immediate that $c$ induces the permutation
$(u_1u_2)(u_3u_4)(u_5u_6)$ on $X$. Moreover, $d$ centralizes $u_1$ and maps
$u_2 \mapsto u_3$, $u_4 \mapsto u_5$. From the last relation in
\eqref{eq:relators} follows $u_1^{cdc}=u_1^{cd^2}$, hence $u_3^d=u_3^c=u_4$.
By $[d^4,c]^2=1$ we have 
\[ u_1^{cd^4c}=cd^4cd^4cd^4c=d^4[d^4,c]^2= d^4=u_1, \]
thus $u_2=u_1^c=u_1^{cd^4}=u_4^{d^2}=u_5^d$. To see that $d$ acts on $X$, we
need to show that $d$ centralizes $u_6$:
\[u_6^d=u_1^{cdcdcd}=u_1^{(cd)^{-2}}=u_1^{d^{-1}cd^{-1}c}=
u_1^{cd^{-1}c}=u_2^{d^{-1}c}=u_5^c=u_6. \] 
The action of $d$ on $X$ is therefore $(u_2u_3u_4u_5)$. This not only shows that
$X$ is a conjugacy class in $G$, but we also have the action of $G$ on
$X$. Indeed, one shows by straightforward calculation that $\tilde
c=(12)(34)(56)$ and $\tilde d=(2345)$ satisfy the relations of $S_5$ from Lemma
\ref{lm:s5pres}. Since the action of $S_5$ on $6$ points is unique, we have
$G/C_G(X)\cong PGL(2,5)$.

As $[u_1,u_2]=[d^4,cd^4c]=[d^4,c]^2=1$ and $PGL(2,5)$ acts $2$-transitively,
$[u_i,u_j]=1$ holds for all $i,j$. This means that $J=\langle X \rangle$ is an
elementary Abelian $2$-group and $|J|=32$ by $u_1\cdots u_6=1$. Using the
presentation of $2.S_5$ from Lemma \ref{lm:s5pres}, $G/J_0\cong 2.S_5$. This
implies 
\[[G,G]/[G,J]\cong [G/J_0,G/J_0] \cong 2.A_5\cong SL(2,5).\] 
Finally, $G$ splits over $[G,G]J$ as $c \not\in [G,G]J$. 
\end{proof}

In the sequel, $G$ will denote a group satisfying (*). We would like to make
clear that it can be shown using the computer algebra system GAP \cite{GAP} that
the group given in Lemma \ref{lm:25S5_constr} is the unique group with this
property. However, we hope that this more general approach will help in future
generalization of the constructions of this paper. 

Among other properties, we show in the next lemma that for our group $G$,
$G'=[G,G]$ is a perfect group. Actually, we found $G'$ by using the library of
perfect groups in the computer algebra system GAP \cite{GAP} and constructed $G$
as a split extension of $G'$ by an outer automorphism of order $2$.

\begin{lemma} \label{lm:Gprops}
Let $G$ be a group satisfying (*) and define $J_0=[G,J]$. 
\begin{enumerate}[(i)]
\item We have $G''=G'=[G,G]=[G,G]J$ and $|G:G'|=2$. 
\item $G\setminus J$ contains a unique class $c^G$ of involutions,
and $|c^G|=80$. In particular, all involutions of $G'=[G,G]=[G,G]J$ lie in $J$. 
\item Let $P$ be a Sylow $5$-subgroup. Then $N_{J_0}(P)=\{1\}$ and
$N_G(P)\cong C_8 \ltimes C_5$. Moreover, if the subgroup $U\leq G$ maps onto
$F_{20}$ modulo $J$ then $U=N_G(P)$ or $U=N_G(P)J_0$. 
\end{enumerate}
\end{lemma}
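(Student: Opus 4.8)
The strategy throughout is to work modulo the subgroups $J$ and $J_0=[G,J]$, using property (*) to pass between $G$ and the well-understood quotients $G/J\cong PGL(2,5)$ and $G/J_0\cong 2.S_5$ (the latter via Lemma \ref{lm:25S5_struct} and the presentation in Lemma \ref{lm:s5pres}). The module-theoretic content of (*) is that $J$ is the $\mathbb F_2$-permutation module on six points for $PGL(2,5)$ modulo its center, so $J$ is $4$-dimensional over $\mathbb F_2$; since the natural permutation module is uniserial with the all-ones vector spanning the center and the fixed quotient being trivial, the commutator submodule $J_0=[G,J]$ is the $4$-dimensional ``heart'' quotiented appropriately, and one checks $|J_0|$ directly. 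These dimension counts, together with $G/J\cong PGL(2,5)$, drive the arithmetic in all three parts.

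For \textbf{(i)}, the plan is to compute $G/[G,G]$. Since $G/J_0\cong 2.S_5$ and $2.S_5$ has abelianization $C_2$ (its commutator subgroup is $2.A_5=SL(2,5)$, which is perfect), we get $|G:[G,G]J_0|=2$. The final sentence of Lemma \ref{lm:25S5_struct} states $G$ splits over $[G,G]J$, and its proof shows $[G,G]/[G,J]\cong SL(2,5)$ is perfect; combined with $J_0=[G,J]\leq [G,G]$ this forces $[G,G]=[G,G]J$ and $G''=G'$, with $|G:G'|=2$. So (i) is essentially bookkeeping on commutator subgroups of the known quotients.

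For \textbf{(ii)}, I would count involutions. The involutions of $2.S_5$ outside its center that lie over transpositions form a single class whose preimages in $G$ (outside $J$) I want to identify with $c^G$; the point is that $G$ splits over $G'J=[G,G]J$ by (i), so $c$ and its $G$-conjugates account for all involutions in the nontrivial coset, and one computes $|c^G|=80$ from the centralizer order in $PGL(2,5)$ (a transposition in $S_5$ has centralizer of order $12$, giving $120/12=10$ transpositions, times the module contribution). The claim that all involutions of $G'$ lie in $J$ then follows because $G'/J_0\cong SL(2,5)$ has a unique involution (central, of order $2$), so any involution of $G'$ maps into $J$, hence lies in $J$ since $J$ is elementary abelian. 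For \textbf{(iii)}, let $P$ be a Sylow $5$-subgroup; since $J_0$ is a $2$-group on which $P$ acts, I compute the fixed points $C_{J_0}(P)$ and show $N_{J_0}(P)=C_{J_0}(P)=\{1\}$ by examining the action of an order-$5$ element on the $4$-dimensional module $J_0$ (a fixed-point-free action, as $5\nmid 2^k-1$ for $k\leq 4$ except $k=4$, so I must check the module has no trivial $P$-summand). Then $N_G(P)$ maps onto $N_{PGL(2,5)}(\bar P)\cong F_{20}$, and lifting through $G/J_0\cong 2.S_5$ pins down $N_G(P)\cong C_8\ltimes C_5$; the dichotomy for $U$ follows from whether $U$ contains $J_0$ or meets it trivially.

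The main obstacle will be \textbf{(iii)}, specifically verifying that the order-$5$ element acts fixed-point-freely on $J_0$ and correctly identifying the extension type of $N_G(P)$ as $C_8\ltimes C_5$ rather than a split $F_{20}\times(\text{something})$ or a different nonsplit form; this requires genuine attention to the module structure and to how the nonsplit $2.S_5$ extension restricts to the Borel subgroup $F_{20}$, rather than the routine quotient arithmetic that settles (i) and (ii).
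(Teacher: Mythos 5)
Your overall strategy---working in the quotients $G/J\cong PGL(2,5)$ and $G/J_0$ and exploiting the module structure of $J$---is the paper's, and your route in (iii) (embed $N_G(P)$ into $G/J_0$ using $N_{J_0}(P)=1$ and a Frattini argument, then identify the image with the preimage of $F_{20}$ in $2.S_5$, which is $C_8\ltimes C_5$ because $4$-cycles lift to elements of order $8$) is a legitimate and arguably cleaner alternative to the paper's argument, which instead passes to $\bar U=U/\langle a\rangle$ and invokes a complement theorem from Huppert. However, part (i) contains a genuine gap. You claim that ``$[G,G]/[G,J]\cong SL(2,5)$ is perfect, combined with $J_0=[G,J]\leq[G,G]$, forces $G''=G'$.'' It does not: those two facts give only $G''J_0=G'$ (from $(G'/J_0)'=G'/J_0$), and nothing you cite places $J_0$ inside $G''$. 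The implication is false in general: for $A=SL(2,5)\times C_2^4$ and $B=C_2^4$ one has $B\le A$ and $A/B$ perfect, yet $A'\neq A$. What is needed is precisely $[G',J_0]=J_0$, and this is the one real idea in the paper's proof of (i): by $2$-transitivity, $PSL(2,5)$ is transitive on the $15$ nonidentity elements of $J_0$, so $J_0$ is a minimal normal subgroup of $[G,G]J$; since $G'$ does not centralize $J_0$, minimality forces $[G',J_0]=J_0\le G''$, and only then does perfectness of $G'/J_0$ yield $G''=G'$. Your (i) omits this module-theoretic input entirely (calling (i) ``essentially bookkeeping'' is exactly what it is not).

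Two further points. First, you take $G/J_0\cong 2.S_5$ as known ``via Lemma \ref{lm:25S5_struct},'' but that lemma concerns only the concrete permutation group of Lemma \ref{lm:25S5_constr}, whereas Lemma \ref{lm:Gprops} is stated for an arbitrary group satisfying (*). The identification must be derived from (*): one uses $(G/J_0)'=G'/J_0\cong SL(2,5)$ together with $G/J\cong S_5$ to exclude $SL(2,5)\times C_2$, and the splitting of $G$ over $[G,G]J$ to produce a noncentral involution, which excludes the other double cover of $S_5$; since your (ii) and (iii) lean heavily on $2.S_5$, this step should be proved, not cited. Second, in (ii) the passage from ``the $20$ noncentral involutions of $2.S_5$ are conjugate'' to ``all involutions of $G\setminus J$ are conjugate in $G$'' is not automatic: the paper bridges it with the count $|c^{J_0}|=|J_0:C_{J_0}(c)|=4=|cC_{J_0}(c)|$ (using $\dim_{\mathbb F_2}C_{J_0}(c)=2$), showing every involution in $cJ_0$ is already $J_0$-conjugate to $c$; your sketch (class length in $S_5$ ``times the module contribution'') skips this and, taken at face value, gives the lower bound $40$ rather than $80$, since only an index-$2$ subgroup of $C_{S_5}(c^+)$ lifts into $C_G(c)$ modulo $J$. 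On the positive side, your argument that all involutions of $G'$ lie in $J$ (via the unique involution of $SL(2,5)$) is correct and more direct than the paper's conjugacy argument.
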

\begin{proof} 
(i) Let $V$ be the permutation $\mathbb F_2$-module of $PGL(2,5)$ with basis
$\{u_1,\ldots,u_6\}$. Due to the $2$-transitivity, the orbit of the element
$u_1+u_2$ consists of the elements $u_i+u_j$, $i\neq j$ which are different
modulo the center $\langle u_1+\ldots+u_6 \rangle$ of $V$. Hence both $PGL(2,5)$
and $PSL(2,5)$ act transitively on the nonidentity elements of $J_0=[G,J]$,
which implies that $J_0$ is a minimal normal subgroup in $G$ and $[G,G]J$. It
follows that $J_0=[G',J_0]$ and $G''/J_0=(G'/J_0)'=G'/J_0$ by
$SL(2,5)'=SL(2,5)$. This means $G''=G'$. Finally, $J\leq G'$ follows from
$J/J_0=Z(G/J_0)\leq (G/J_0)'=G'/J_0$. 

(ii) Since $G$ splits over $G'=G'J$ we can take an involution $c$ from
$G\setminus G'$; the image of $c$ in $G/J\cong S_5$ is a transposition. As $J$
is the permutation module modulo its center, $\dim_{\mathbb F_2}(C_{J_0}(c))=2$
and $\dim_{\mathbb F_2}(C_{J}(c))=3$. It is easy to check that $2.S_5\cong
G/J_0$ contains $20$ non-central involutions and they are all conjugate. 

Let $c'$ be another involution in $G\setminus J$; we want show that $c,c'$ are
conjugate. For some $g\in G$, $(cJ_0)^{gJ_0}=c'J_0$, that is, $c^g\in
c'J_0$. Hence we can assume $c\in c'J_0$, $c'=cj$ with $j \in J_0$. The element
$cj$ has order $2$ if and only if $j\in C_{J_0}(c)$. On the one hand, $c^{J_0}
\subseteq cC_{J_0}(c)$. On the other hand,
\[|c^{J_0}|=|J_0:C_{J_0}(c)|=4=|C_{J_0}(c)|=|cC_{J_0}(c)|.\]
This implies $c^{J_0}=cC_{J_0}(c)$ and $c' \in c^{J_0}$. No involution of $G'$
can be conjugate to $c$, hence all involutions of $G'$ must lie in $J$. Finally,
we show $|c^G|=80$. As $c^g\in cJ_0$ if and only if $c^g= c^j$
for some $j\in J_0$, we have $N_G(cJ_0)=C_G(c)J_0$. Moreover,
$C_{G/J_0}(cJ_0)=N_G(cJ_0)/J_0$. Thus,
\begin{eqnarray*}
|G:C_G(c)| &=& |G:N_G(cJ_0)||C_G(c)J_0:C_G(c)| \\
 &=& |G/J_0:C_{G/J_0}(cJ_0)||J_0:C_{J_0}(c)| \\
 &=& |(cJ_0)^{G/J_0}||c^{J_0}|=20\cdot 4=80. 
\end{eqnarray*}

(iii) $P$ acts fixed point free on the involutions of $J_0$, thus, $N_{J_0}(P) =
1$. Moreover, $5\nmid |J|-1$, hence $P$ centralizes a unique element $a \in J$.
Let $U$ be a preimage of $F_{20}$ modulo $J$ and put $\bar U=U/\langle a
\rangle$, $\bar J=J/\langle a \rangle$. Then $\bar J$ is a minimal normal
subgroup of $\bar U$. Since $F_{20}=\bar U/\bar J$ acts faithfully on $\bar J$,
we have $C_{\bar U}(\bar J)=\bar J$. By \cite[II.3.3. Satz]{Huppert}, $\bar J$
has a complement $\bar H$ in $\bar U$, $\bar H \cong F_{20}$. Let $H$ be the
preimage of $\bar H$, then $H$ has a unique (hence normal) $5$-Sylow and $H\cong
C_8 \ltimes C_5$. This shows $N_G(P)\cong C_8 \ltimes C_5$. For the last
statement, record that $U\cap J_0$ is either $1$ or $J_0$. 
\end{proof}

The following proposition will apply in all of our examples of Bol loop folders
of exponent $2$. We hope that it will also apply in future constructions not
considered here. Recall that $O_2(G)$ is the largest normal $2$-subgroup of $G$.

\begin{proposition} \label{pr:general}
Assume $G$ is a finite group, $J=O_2(G)$ and $G^+=G/J\cong S_5$. We denote by
$g^+$ the element of $S_5$ corresponding to $gJ$. Set $L=G'J$, $K_1$ the
involutions in $G\setminus L$, $K_0$ a $G$-invariant subset of $J$ containing
$1$ such that $K_0\setminus \{1\}$ consists of involutions, and $H\leq G$. Set
$K=K_0\cup K_1$, $n_0=|K_0|$, and $n_1=|K_1\cap aJ|$ for $a \in K_1$. Assume
\begin{enumerate}[(a)]
\item $(J,H\cap J,K_0)$ is a Bol loop folder of exponent $2$. 
\item $n_0=2n_1$.
\item $|G^+:H^+|=6$.
\item For each $a\in K_1$, $C_{H\cap J}(a)=1$. 
\item Every involution of $L$ is contained in $J$. 
\end{enumerate}
Then $(G,H,K)$ is a Bol loop folder of exponent $2$, and $|K|=6n_0=12n_1$. 
\end{proposition}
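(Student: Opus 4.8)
The plan is to verify the two defining properties of a Bol loop folder of exponent $2$ separately. The ``exponent $2$'' part is immediate: $K_0\setminus\{1\}$ is a union of involution classes by hypothesis, and $K_1$, being the set of involutions in the $G$-invariant set $G\setminus L$, is likewise a union of conjugacy classes of involutions; hence $K\setminus\{1\}$ is a union of involution classes. It remains to show that $K$ is a common transversal of all conjugates $H^g$. By definition of loop folder $K$ must be a common right transversal of every $H^g$, and since conjugate subgroups share the index $|G:H|$, this holds if and only if $|K|=|G:H|$ and, for all distinct $k,k'\in K$, one has $k(k')^{-1}\notin H^G:=\bigcup_{g\in G}H^g$ (injectivity into the coset space of each $H^g$, which together with the cardinality match forces bijectivity).

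For the cardinality, $(c)$ forces $H^+$ to be one of the six conjugates of $F_{20}$ in $S_5$; in particular $|H^+|=20$, so $H^+$ has no element of order $3$ and all its involutions are double transpositions. Consequently $H^G$ maps into $\bigcup_g(H^+)^{g^+}$, a set containing neither a transposition nor a $3$-cycle. By $(a)$, $K_0$ is a transversal of $H\cap J$ in $J$, so $n_0=|J:H\cap J|$ and $|G:H|=|G^+:H^+|\,|J:H\cap J|=6n_0$. On the other hand each $a\in K_1$ maps to a transposition of $S_5$, and since $J$ is elementary abelian a short computation shows the involutions of the coset $aJ$ are exactly $aC_J(a)$; by conjugacy of the transpositions the number $n_1=|C_J(a)|$ is independent of the coset, and as there are $10$ transpositions we get $|K_1|=10n_1$ and $|K|=n_0+10n_1=6n_0=12n_1$ using $(b)$. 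Thus $|K|=|G:H|$.

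The separation condition $k(k')^{-1}\notin H^G$ I would prove by cases on the image $(k(k')^{-1})^+\in S_5$. The pivotal observation, the key lemma, is that $(H\cap J)^g\cap C_J(a)=1$ for every $a\in K_1$ and every $g\in G$: conjugating by $g^{-1}$ turns this into $C_{H\cap J}(a^{g^{-1}})=1$, which is exactly $(d)$ since $K_1$ is $G$-invariant. If $k,k'\in K_0$ then $k(k')^{-1}\in J$, and the $G$-invariance of $K_0$ upgrades the transversal property of $(a)$ to one for every $(H\cap J)^g=H^g\cap J$, excluding $k(k')^{-1}$. If exactly one of $k,k'$ lies in $K_1$ the image is a transposition, and if both lie in $K_1$ over transpositions sharing one point the image is a $3$-cycle; both are barred from $H^G$ by the previous paragraph. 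If $k,k'\in K_1$ lie over the same transposition, then $m:=k(k')^{-1}=kk'\in J$ is a nontrivial involution with $kmk^{-1}=m^{-1}=m$, so $m\in C_J(k)$, and the key lemma with $a=k$ gives $m\notin H^g\cap J$ for all $g$.

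The main obstacle is the last case: $k,k'\in K_1$ over disjoint transpositions, so $m=kk'$ has image a double transposition, which \emph{does} occur in conjugates of $F_{20}$, so the image test fails. Here I would argue by contradiction: if $m\in H^g$, then $m^2\in(H\cap J)^g$ (as $m^+$ is an involution, $m^2\in J$), and $kmk^{-1}=m^{-1}$ yields $km^2k^{-1}=m^{-2}=m^2$, i.e. $m^2\in C_J(k)$; the key lemma then forces $m^2=1$, so $m$ is an involution. But $m^+$ is even, hence $m\in L$, and now $(e)$ forces $m\in J$, contradicting that $m^+$ is a nontrivial double transposition. This closes the injectivity, so $K$ is a common transversal of all $H^g$ and $(G,H,K)$ is a Bol loop folder of exponent $2$ with $|K|=6n_0=12n_1$.
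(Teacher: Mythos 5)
Your overall strategy (a counting step showing $|K|=|G:H|=6n_0$, plus a separation step proved case-by-case on images in $S_5$, with hypothesis (d) conjugated into your ``key lemma'') is sound and runs parallel to the paper's proof, and your cases where $k,k'\in K_0$, where exactly one of $k,k'$ lies in $K_1$, and where the two transpositions share a point are all correct. However, there is a genuine gap: you twice invoke the assumption that $J$ is elementary abelian, and this is \emph{not} a hypothesis of the proposition --- only $J=O_2(G)$, an arbitrary normal $2$-group, is assumed. Concretely, in the case of $k,k'\in K_1$ lying over the same transposition you assert that $m=kk'\in J$ satisfies $m^{-1}=m$, and in the case of disjoint transpositions you assert $m^{-2}=m^2$, i.e.\ $(m^2)^2=1$; neither holds in a general $2$-group, and these are precisely the steps that would fail. (Your identification $n_1=|C_J(a)|$ has the same flaw, but there the conclusion $|K_1|=10n_1$ is easily rescued: the cosets $aJ$, $a\in K_1$, are permuted transitively by conjugation since $K_1$ is $G$-invariant and $S_5$ is transitive on transpositions, so $|K_1\cap aJ|$ is constant without any structural assumption on $J$.)

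The missing idea --- and it is exactly the device the paper uses to remain in full generality --- is to pass to the unique involution of a cyclic $2$-group rather than work with $m$ itself. In both problematic cases $(kk')^+$ has order $1$ or $2$ (order $3$ being excluded as you do), so $(kk')^2\in J$ and $kk'$ has $2$-power order; thus $\langle kk'\rangle$ is a nontrivial cyclic $2$-group with a unique involution $z$. Both $k$ and $k'$ invert $\langle kk'\rangle$, hence centralize $z$. Now suppose $kk'\in H^g$. Since $k,k'\notin L$ and $|G:L|=2$, we have $kk'\in L$, so $z$ is an involution of $L$ and hypothesis (e) gives $z\in J$; also $z\in\langle kk'\rangle\leq H^g$, so $z\in (H\cap J)^g\cap C_J(k)$, which is trivial by your key lemma (i.e.\ by (d)), contradicting $z\neq 1$. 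This single argument disposes of both of your last two cases simultaneously, with no assumption on the structure of $J$; inserting it in place of the elementary-abelian shortcuts makes your proof correct.
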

\begin{proof} First $K_1^+$ is the set of transpositions of $S_5$, so $|K_1^+|=10$.
This implies that $n_1$ is well defined. Indeed, for $a,b\in K_1$, $aJ, bJ$ are
conjugate, hence $K_1\cap aJ, K_1\cap bJ$ are conjugate in $G$. Moreover,
$|K_1|=10n_1$ and by (b), 
\[|K|=|K_0|+|K_1|=2n_1+10n_1=12n_1=6n_0.\]
Next by (a) and (c), 
\[|G:H|=|G:HJ||HJ:H|=|G^+:H^+||J:J\cap H|=6|K_0|=6n_0,\]
so $|G:H|=|K|$. 

We claim $xy\not\in H$ for distinct $x,y\in K$. If so, as $|G:H|=|K|$, $K$ is a
set of coset representatives for $H$ in $G$. Then as $K$ is $G$-invariant and
$K\setminus \{1\}$ consists of involutions, $(G,H,K)$ is a Bol loop folder of
exponent $2$. 

If $x,y\in J$ then $x,y\in K_0$, so $xy\not\in H$ by (a). Next $K_1^+ \cap
H^+=\emptyset$, so if $x\in J$ and $y\in K_1$ then $(xy)^+=y^+ \not \in H^+$, so
$xy \not \in H$. Thus we may take $x,y\in K_1$ and $xy\in H$. Now as $K_1^+$ is
a set of transpositions in $S_5$, the order of $(xy)^+$ is $1,2$ or $3$. Since
$H^+\cong F_{20}$ has no element of order $3$, we get $(xy)^2 \in J$. In
particular, $D=\langle x,y \rangle$ is a $2$-group. Let $z$ be the unique
involution in $\langle xy \rangle$. By $xy \in L$ and (e), $z\in H\cap J$.
Moreover, $x,y$ commute with $z$, which contradicts to (d). 
\end{proof}

\begin{remark}
The fact that (e) is necessary can be seen from the counterexample $G=S_5\ltimes
J$. 
\end{remark}

\begin{theorem} \label{th:firstexample}
Assume $G$ is a group satisfying condition (*) of Lemma \ref{lm:25S5_struct}.
Let $J_0$ be the minimal normal subgroup of $G$ and put $K=J_0 \cup c^G$.
Define $H=N_G(P)$ where $P$ is a 5-Sylow subgroup of $G$. Then $(G,H,K)$ is a
Bol loop folder determining a simple Bol loop of exponent $2$ of order $96$.
Conversely, if $(G,H^*,K^*)$ is an exponent $2$ Bol loop folder then $H^*$ is
a conjugate of $H$ and $K^*=K$. 
\end{theorem}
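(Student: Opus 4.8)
The plan is to verify the hypotheses of Proposition~\ref{pr:general} for the specific data $J=J_0$ (the minimal normal subgroup), $K_0=J_0$, $K_1=c^G$, and $H=N_G(P)$, and then separately establish simplicity of the resulting loop and the uniqueness statement. First I would check the numerical and structural conditions (a)--(e). For (a), since $K_0=J_0$ is the whole elementary Abelian $2$-group and $H\cap J_0=1$ (this follows from Lemma~\ref{lm:Gprops}(iii), as $N_{J_0}(P)=1$), the triple $(J_0,1,J_0)$ is just the elementary Abelian $2$-group regarded as a Bol loop folder, which is trivially of exponent $2$. For the counting conditions, $n_0=|K_0|=|J_0|=16$ and $n_1=|c^G\cap aJ|$; here I must use that $|c^G|=80$ from Lemma~\ref{lm:Gprops}(ii) together with the fact that the ten transposition-cosets each contribute equally, giving $n_1=8$, so indeed $n_0=2n_1=16$, which is condition (b). Condition (c) is immediate since $H^+\cong F_{20}$ has index $6$ in $S_5\cong G^+$. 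Condition (e) is precisely the last assertion of Lemma~\ref{lm:Gprops}(ii), that every involution of $G'=L$ lies in $J$.

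The key remaining hypothesis is (d): for each involution $a\in c^G$, one needs $C_{H\cap J}(a)=1$. Since $H\cap J=1$ already (from $N_{J_0}(P)=1$ and the fact that $J=J_0$ here), this holds vacuously. I would be careful to confirm that in this theorem $J=O_2(G)$ really equals $J_0$ rather than the larger group $J$ appearing in (*); this needs checking, since Proposition~\ref{pr:general} is stated with $J=O_2(G)$. If $O_2(G)=J$ (the order-$32$ group) rather than $J_0$, then $H\cap J$ need not be trivial and condition (d) becomes substantive, requiring one to show that the fixed space $C_J(a)$ of an involution $a$ meets the $F_{20}$-complement trivially; this reconciliation of notation is the first thing I would pin down, and it is a genuine subtlety rather than a formality.

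Once the folder is established with $|K|=6n_0=96$, I would prove simplicity of the loop $Q=(K,\circ)$. The standard tool is that normal subloops of $Q$ correspond to systems of blocks, equivalently to subgroups lying between $H$ and $G$ that are normalized appropriately; more precisely the loop is simple iff the action of the right multiplication group $G$ on the cosets $G/H$ is primitive, or iff there is no proper nontrivial $G$-congruence on $K$. So I would show that $H=N_G(P)$ is a maximal subgroup of $G$, or directly that the only subgroups $M$ with $H\le M\le G$ are $H$ and $G$. Using $H\cong C_8\ltimes C_5$ and the structure of $G$ as an extension with $G/J\cong PGL(2,5)$ and $J$ elementary Abelian, the intermediate subgroups are controlled by $F_{20}$-submodules of $J$ together with the lift; since $J_0$ is a \emph{minimal} normal subgroup and $F_{20}$ acts on $J/\langle a\rangle$-type quotients irreducibly enough, I expect no proper intermediate subgroup to survive, giving primitivity and hence simplicity.

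For the converse uniqueness claim I would argue that any exponent-$2$ Bol loop folder $(G,H^*,K^*)$ must, by the Baer correspondence constraints, have $K^*$ a $G$-invariant union of the identity with conjugacy classes of involutions summing to $|G:H^*|$ elements; by Lemma~\ref{lm:Gprops}(ii) the only noncentral involution class in $G\setminus J$ is $c^G$, and inside $J$ the only $G$-invariant involution set is $J_0\setminus\{1\}$ (again by minimality of $J_0$ and the orbit analysis in Lemma~\ref{lm:Gprops}(i)), which forces $K^*=J_0\cup c^G=K$. Then $|G:H^*|=|K|=96$ pins down $|H^*|$, and using Lemma~\ref{lm:Gprops}(iii) --- which classifies the subgroups mapping onto $F_{20}$ modulo $J$ as exactly $N_G(P)$ or $N_G(P)J_0$ --- together with the folder requirement that $H^*$ be core-free and of the correct order, I would eliminate the $N_G(P)J_0$ possibility (its order or its intersection with $J_0$ is wrong for a transversal) and conclude $H^*$ is conjugate to $N_G(P)=H$. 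The main obstacle throughout is the first one: cleanly identifying which of $J$ or $J_0$ plays the role of $O_2(G)$ and verifying that condition (d) is met in the form actually needed, since the whole folder construction rests on that compatibility.
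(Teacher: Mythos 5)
Your overall outline (verify Proposition~\ref{pr:general}, then treat simplicity and the converse separately) matches the paper's, but three steps are genuinely broken. First, there is no ambiguity about $J$ left to ``pin down'': Proposition~\ref{pr:general} hypothesizes $J=O_2(G)$ with $G/J\cong S_5$, and since $G/J_0\cong 2.S_5$ (proof of Lemma~\ref{lm:25S5_struct}), the only possible reading is that $J$ is the order-$32$ subgroup from (*). Your primary verification --- that (a) and (d) hold ``vacuously'' because $H\cap J=1$ --- therefore rests on a false identification, and your text is internally inconsistent: your computation $n_1=80/10=8$ and your reading of (e) already use the order-$32$ group $J$, while (a) and (d) use $J_0$. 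In the correct setting $H\cap J=\{1,z\}$, where $z$ is the unique involution of $J$ centralized by $P$, and the two substantive verifications you omit are exactly what the paper supplies: for (d), no involution $a\in c^G$ can centralize $z$, since then $z$ would be centralized by $G=\langle a,P,J\rangle$, whereas $C_J(G)=0$; for (a), $z\notin J_0$ because $N_{J_0}(P)=1$, so $J_0$ is a complement to $H\cap J$ in $J$.

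Second, your route to simplicity fails outright. $H=N_G(P)$ is \emph{not} maximal in $G$: since $J_0\trianglelefteq G$ and $H\cap J_0=1$, the subgroup $HJ_0$ has order $640$, so $H<HJ_0<G$ and the action of $G$ on $G/H$ is imprimitive. Moreover, ``the loop is simple iff the right multiplication group is primitive'' is false: cosets of a normal subloop do form blocks, but blocks need not come from normal subloops --- indeed this very loop is simple while its right multiplication group has blocks of size $16$. A correct elementary repair would be to show that the only subgroups between $H$ and $G$ are $H$, $HJ_0$ and $G$, so a proper nontrivial normal subloop would have order $16$ and the quotient would be an exponent-$2$ Bol loop of order $6$, which is impossible (the paper uses this same impossibility in its converse); the paper itself instead derives simplicity and the converse from Aschbacher's Main Theorem \cite{Aschbacher1}. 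Third, in the converse your claim that $J_0\setminus\{1\}$ is the only $G$-invariant set of involutions inside $J$ is false: every element of $J\setminus J_0$ is an involution, and $J\setminus J_0$ splits into two $G$-classes, of sizes $6$ and $10$. Worse, without Aschbacher's theorem --- which is what forces $H^*$ to map onto $F_{20}$ modulo $J$ (so that Lemma~\ref{lm:Gprops}(iii) applies) and forces $c^G\subseteq K^*$ --- nothing in your argument excludes folders such as $(G,X,J_0)$ or $(G,Y,J)$ with $X$, $Y$ hypothetical complements to $J_0$, $J$ respectively; ruling these out needs extra structural input (for instance, $C_J(G)=0$ implies $G$ splits over neither $J_0$ nor $J$), which neither your orbit count nor Lemma~\ref{lm:Gprops}(iii) alone provides.
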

\begin{proof} With the notation of Proposition \ref{pr:general}, $K_0=J_0$ and
$K_1=c^G$. Then $n_0=16$, $n_1=|c^G \cap cJ|=80/10=8$ and $|G^+:H^+|=6$, so
(b) and (c) hold. (e) follows from Lemma \ref{lm:Gprops}(ii). Since $J$ is
elementary Abelian, $H\cap J$ consists of $1$ and the unique involution of $H$.
This involution cannot be centralized by $c$, otherwise it would be central in
$G=\langle c,P,J\rangle$; hence (d). Finally, $H\cap J$ is not contained in
$J_0$, therefore $J_0$ is a complement to $H\cap J$ in $J$; showing (a). By
Proposition \ref{pr:general}, $(G,H,K)$ is a Bol loop folder of exponent $2$. 

For the converse, we observe that $(G,H^*,K^*)$ determines a Bol loop of
exponent $2$ with all proper subloops solvable. Thus, by the Main Theorem of
\cite{Aschbacher1}, $H^*$ maps surjectively to $F_{20}$. By Lemma
\ref{lm:Gprops}(iii), $H^*=H$ or $H^*=HJ_0$ up to conjugaction. In the latter
case, the loop has order $6$ which is impossible. Again by Aschbacher's result,
$c^G \subset K^*$. Finally, if $J_0\nsubseteq K^*$, then $K^*$ will contain a
conjugate of the involution of $H$, which is not possible. This proves the
theorem. 
\end{proof}

As the group given in Lemma \ref{lm:25S5_constr} satisfies (*), we have: 

\begin{corollary}
There exists a simple Bol loop of exponent $2$ and order $96$. \qed
\end{corollary}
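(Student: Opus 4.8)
The plan is to exhibit a single concrete group satisfying the hypotheses of Theorem \ref{th:firstexample} and then quote that theorem. The natural candidate is the group $G=\langle c,d\rangle$ defined explicitly by the two permutations on $40$ points in Lemma \ref{lm:25S5_constr}. Everything needed has in fact been assembled in the preceding results, so the only task is to check that this particular $G$ falls under the scope of Theorem \ref{th:firstexample}, i.e. that it satisfies condition (*) of Lemma \ref{lm:25S5_struct}.

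First I would recall that Lemma \ref{lm:25S5_constr} guarantees that the displayed $c$ and $d$ satisfy the relators in \eqref{eq:relators} together with the product relation $u_1u_2u_3u_4u_5u_6=1$. These are precisely the input data used in the proof of Lemma \ref{lm:25S5_struct} to establish (*): the commuting relations force $J=\langle u_1,\ldots,u_6\rangle$ to be elementary Abelian of order $32$ realizing the permutation module modulo its centre, the relations identify $G/J$ with $PGL(2,5)$ and $[G,G]/[G,J]$ with $SL(2,5)$, and the splitting over $[G,G]J$ follows from $c\notin[G,G]J$. Hence Lemma \ref{lm:25S5_struct} applies verbatim and the explicit $G$ satisfies (*).

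With (*) in hand, I would simply invoke Theorem \ref{th:firstexample}: taking $J_0=[G,J]$ the minimal normal subgroup, $K=J_0\cup c^G$, and $H=N_G(P)$ for a Sylow $5$-subgroup $P$, the triple $(G,H,K)$ is a Bol loop folder determining a simple Bol loop of exponent $2$ of order $96$. This yields the asserted existence statement, which is why the corollary carries a bare \qed.

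The point worth emphasizing is that there is no remaining obstacle here: all the substantive work---verifying the relations on the explicit permutations (Lemma \ref{lm:25S5_constr}), deducing the module-theoretic structure (*) (Lemma \ref{lm:25S5_struct}), and checking hypotheses (a)--(e) of Proposition \ref{pr:general} to get simplicity and order $96$ (Theorem \ref{th:firstexample})---has already been carried out. The corollary is therefore a pure instantiation, and the only conceivable difficulty, namely confirming that the concrete $40$-point permutations really do satisfy \eqref{eq:relators}, is exactly the routine computation relegated to the reader in Lemma \ref{lm:25S5_constr}.
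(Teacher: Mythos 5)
Your proposal is correct and is exactly the paper's own argument: the paper derives the corollary by noting that the group of Lemma \ref{lm:25S5_constr} satisfies (*) (as shown in Lemma \ref{lm:25S5_struct}) and then invoking Theorem \ref{th:firstexample}. Nothing is missing; the corollary really is a pure instantiation, which is why the paper gives it with a bare \qed.
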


\begin{remark} 
The Bol loop folder $(G,H,K)$ of Theorem \ref{th:firstexample} was discovered
independently by B. Baumeister and A. Stein \cite{BaumeisterStein} (Free
University of Berlin), as well. 
\end{remark}

\section{$S_5$-modules over $\mathbb F_2$} \label{sec:S5modules}

In this section we collect some useful facts about $kS_5$-modules, where $k$ is
a field of characteristic $2$.

\begin{lemma} \label{lm:S5modules}
The group $S_5$ has three absolutely irreducible representations over $\mathbb
F_2$: the trivial representation and two representations $M,N$ of dimension
4. The two $4$-dimensional modules can be distinguished by the fact that
$C_M(x)=0$ and $\dim_{\mathbb F_2}(C_N(x))=2$ for an element $x\in S_5$ of order
$3$. Moreover, the following hold.
\begin{enumerate}[(i)]
\item $M$ is the $4$-dimensional irreducible component in the $6$-dimensional
  permutation module for $S_5\cong PGL(2,5)$. Also, let $V$ be the natural
  $2$-dimensional module of $A_5\cong SL(2,4)$ over the field $\mathbb F_4$ and
  $\sigma$ be semilinear map of $V$ induced by the Frobenius automorphism of
  $\mathbb F_4$. Then $S_5\cong SL(2,4)\rtimes \langle \sigma \rangle$ and $V$
  is a $4$-dimensional $S_5$-module over $\mathbb F_2$. The $S_5$-modules $M$
  and $V$ are isomorphic.
\item $N$ is is the $4$-dimensional irreducible component in the $5$-dimensional
  permutation module of $S_5$. Also if $N$ is a $4$-dimensional orthogonal space
  of Witt index $1$ over $\mathbb F_2$, then $O(N)=O^-_4(2)\cong S_5$. Note that
  $N$ has $5$ singular and $10$ nonsingular vectors and these are the
  $S_5$-orbits on $N$. 
\item $N$ is absolutely irreducible as $A_5$-module. $M$ is irreducible but not
  absolutely as an $A_5$-module, the splitting field being $\mathbb F_4$. In
  particular, the modules are nonisomorphic as $A_5$-modules.
\item $N$ and $M$ are isomorphic absolutely irreducible projective $F_{20}$-modules. 
\end{enumerate}
\end{lemma}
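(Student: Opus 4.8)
The plan is to combine the standard enumeration of modular irreducibles with two explicit models coming from permutation and classical-group actions. First I count the absolutely irreducible $\overline{\mathbb F_2}S_5$-modules by counting the $2$-regular classes of $S_5$ — those of the identity, the $3$-cycles and the $5$-cycles — which gives exactly three. One of them is the trivial module, so it remains to produce two four-dimensional modules and show they exhaust the rest. For this I take $N$ to be the sum-zero submodule of the five-point permutation module $\mathbb F_2^5$, which splits off as a direct summand because $5$ is odd, and $M$ to be the ``heart'' $P^0/\langle\mathbf 1\rangle$ of the six-point permutation module $P$ of $PGL(2,5)\cong S_5$, where $\mathbf 1\in P^0$ since $6$ is even; a short count of augmentation submodules shows both are four-dimensional.

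The distinguishing invariant is computed directly in these models. A $3$-cycle $x$ fixes a three-dimensional subspace of $\mathbb F_2^5$, and since $\langle\mathbf 1\rangle$ is an $x$-invariant direct complement to $N$, I get $\dim C_N(x)=2$. For $M$, the element $x$ acts on the six points as two $3$-cycles; a telescoping argument shows $\mathbf 1$ is not in the image of $x-1$ on $P^0$, which forces $C_M(x)=0$. In particular $M\not\cong N$. Restricting $M$ to $A_5\cong SL(2,4)$, the vanishing $C_M(x)=0$ rules out the unique absolutely irreducible four-dimensional $A_5$-module, so $M|_{A_5}$ is the restriction of scalars of the natural two-dimensional $\mathbb F_4$-module $V$; hence $M\otimes\mathbb F_4\cong V\oplus V^{(2)}$, with the field automorphism $\sigma$ interchanging the two summands. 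Because $\sigma$ realizes the outer involution of $A_5$ and fuses the two $A_5$-constituents, $M\otimes\mathbb F_4$ stays irreducible over $S_5$: thus $M$ is absolutely irreducible as an $S_5$-module but not as an $A_5$-module, with splitting field $\mathbb F_4$, giving (iii). The same fusion argument shows that the natural module $V$, viewed over $\mathbb F_2$ with $S_5\cong SL(2,4)\rtimes\langle\sigma\rangle$ (using $\operatorname{Out}(A_5)=C_2$), is absolutely irreducible, four-dimensional and has $C_V(x)=0$, so by the distinguishing invariant $M\cong V$, which is (i). For $N$ I use that $N|_{A_5}$ is the absolutely irreducible four-dimensional $A_5$-module, so $\End_{\mathbb F_2 S_5}(N)=\mathbb F_2$ and $N$ is absolutely irreducible over both groups; this completes the classification, since we now have three pairwise non-isomorphic absolutely irreducibles.

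Part (ii) I read off from the isomorphism $O_4^-(2)\cong S_5$ recalled in Section~\ref{sec:First}. The minus-type space has $16$ vectors; a direct count gives $6$ singular vectors, hence $5$ nonzero singular and $10$ nonsingular, and Witt's theorem makes the orthogonal group transitive on each set, so these are the two $S_5$-orbits (the degree-$5$ and degree-$10$ actions with stabilizers $S_4$ and $S_3\times S_2$). Irreducibility of the natural module together with the order-$3$ invariant identifies it with $N$ rather than $M$. For (iv) I restrict to $F_{20}=C_5\rtimes C_4$: it has two $2$-regular classes, hence only the trivial module and one nontrivial absolutely irreducible module, namely the four-dimensional $W=\operatorname{Ind}_{C_5}^{F_{20}}\lambda$ for a nontrivial character $\lambda$ of $C_5$; this $W$ is projective because its restriction to the Sylow $2$-subgroup $C_4$ is the free module. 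Since a $5$-cycle acts fixed-point-freely on both $M$ and $N$, each restricts to $F_{20}$ with no trivial constituent and is therefore isomorphic to $W$, which gives (iv).

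The step I expect to be the main obstacle is the bookkeeping distinguishing absolute irreducibility over $S_5$ from that over $A_5$: one must argue precisely that the outer involution genuinely interchanges the two $\mathbb F_4$-constituents of $M$ (so that extension of scalars keeps $M$ irreducible over $S_5$) while $N$ stays absolutely irreducible on restriction to $A_5$. Everything else reduces to orbit counts, fixed-space computations for elements of orders $3$ and $5$, and the standard equality between the number of absolutely irreducible modules in characteristic $p$ and the number of $p$-regular classes.
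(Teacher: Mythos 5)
Your proof is correct, and its skeleton coincides with the paper's: both count absolutely irreducible modules by the number of $2$-regular (odd-order) classes, realize $M$ and $N$ inside the $6$- and $5$-point permutation modules, separate them by $\dim C(x)$ for $x$ of order $3$, identify the unique nontrivial absolutely irreducible $F_{20}$-module by excluding trivial constituents, and obtain projectivity from the fact that the restriction to the Sylow $2$-subgroup $C_4$ is free, via the Sylow-restriction criterion (the paper's \cite[Theorem 4.2 and Corollary 9.3]{Alperin}). The genuine divergence is in how the irreducibility statements are certified. The paper outsources them: absolute irreducibility of $M,N$ over $S_5$ and their behavior on restriction to $A_5$ and $F_{20}$ are quoted from Mortimer's table \cite{Mortimer}, and the remaining checks are declared ``straightforward calculations''. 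You instead prove them from scratch, using the Steinberg classification of $\overline{\mathbb F}_2SL(2,4)$-irreducibles ($1$, $V$, $V^{(2)}$, $V\otimes V^{(2)}$), additivity of fixed-space dimensions over composition factors (legitimate since restriction to $\langle x\rangle$ is semisimple in characteristic $2$), and the fusion of $V$ with $V^{(2)}$ under the outer/field automorphism $\sigma$, which simultaneously yields (iii) and the absolute irreducibility of $M$ over $S_5$; this is exactly in the spirit of the paper's own remark after the lemma that the modules ``can immediately be constructed using the Steinberg Tensor Product Theorem''. Your $F_{20}$ argument (a $5$-cycle acts fixed-point-freely, hence no trivial constituents) is the paper's ``cannot be brought to upper triangular form'' argument in different clothing. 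Two facts you assert rather than derive---that $N|_{A_5}$ is the absolutely irreducible $4$-dimensional $A_5$-module, and that the natural $O_4^-(2)$-module is irreducible---are precisely the items the paper cites Mortimer for, and both follow from invariants you already computed (e.g.\ $\dim C_N(x)=2$ together with $C_N(g)=0$ for a $5$-cycle $g$ forces the Steinberg factor on restriction to $A_5$), so there is no real gap: your route buys self-containedness at the cost of length, while the paper buys brevity at the cost of reliance on tables.
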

\begin{proof} Let us first define $N,M$ as irreducible components of the permutation
modules. By \cite[Table 1]{Mortimer}, they are absolutely irreducible. As $S_5$
has $3$ classes of elements of odd order, by \cite[Theorem 3.2]{Alperin} $S_5$
has no other absolutely irreducible modules over $F_2$. The properties of $N,M$
can be verified by straightforward calculations, the irreducibility as $A_5$ and
$F_{20}$-modules follows again from \cite[Table 1]{Mortimer}. We show
$N_{F_{20}}\cong M_{F_{20}}$. As $F_{20}$ has two classes of elements of odd
order, $F_{20}$ has two absolutely irreducible modules: the trivial one and
$N_{F_{20}}$ coming from the $2$-transitive permutation representation. So if
$M_{F_{20}}$ were not isomorphic to $N_{F_{20}}$ then it could be brought to
upper triangular form over $\bar{\mathbb F}_2$, which is clearly impossible.

It remains to show that $N_{F_{20}}$ is projective. Since $N_{C_4}$ is
isomorphic to the group algebra $\mathbb F_2C_4$, it is a projective
$C_4$-module by \cite[Theorem 4.2]{Alperin}. Using \cite[Corollary
9.3]{Alperin}, we obtain that $N$ is projective as $F_{20}$-module. 
\end{proof}

We observe that these $S_5$-modules can immediately be constructed using the
Steinberg Tensor Product Theorem \cite[Theorem 13.1]{Steinberg}, as well. 

\medskip

We will now construct an $S_5$-module $U$ which will play a central role in the
generalization of our first construction of a Bol loop of exponent $2$.

Let $U=U_1\oplus U_2$ be the direct sum of two copies of $N$ as an $\mathbb
F_2A_5$-module. As $U_i$ is an orthogonal space, we can regard $U$ as an
orthogonal space which is the orthogonal direct sum of the two nondegenerate
subspaces $U_1$ and $U_2$. The stabilizer $B$ of $\{U_1,U_2\}$ in $O(U)$ is
$(G_1 \times G_2)\langle \tau \rangle$ where 
\[G_i=C_{O(U)}(U_{3-i})\cong O(U_i) \cong S_5\]
and $\tau$ is an involution interchanging $U_1,U_2$. Thus $B$ is the wreath
product of $S_5$ with $C_2$. In particular, the elements $\tau$ and 
$(g_1,g_2) \in G_1G_2$ map $u_1\oplus u_2 \in U$ to
\[(u_1\oplus u_2)\tau = u_2\oplus u_1, \mbox{ and } (u_1\oplus u_2)(g_1,g_2) =
u_1g_1\oplus u_2g_2,\]
respectively. Set 
\[G_0=C_{G_1G_2}(\tau)=\{(g,g) \mid g \in S_2\} \cong S_5,\]
and let $L=[G_0,G_0]\cong A_5$, $t_0$ an involution (transposition) in
$G_0\setminus L$, $t=t_0\tau$, and $D=L\langle t \rangle$. Then $t\tau=\tau t$
and the action of $t$ on $U$ is 

\[(u_1\oplus u_2)t=u_2c \oplus u_1c,\]
where $c\in S_5$ is the transposition corresponding to $t_0$. It is immediate
that $D\cong S_5$. 

Set $W=C_U(\tau)$. Then $W$ is an $\mathbb F_2D$-submodule of $U$, and also 
\[W= [U,\tau] = \{u+u\tau \mid u\in U_1\} = \{u\oplus u \mid u \in N\},\]
with the map $u\mapsto [u,\tau]=u+u\tau$ an $\mathbb F_2L$-isomorphism of
$U_1$ with $W$. If $Q$ is the quadratic form on $U$ then as $Q(u_1\oplus
u_2)=Q(u_1)+Q(u_2)$ and $Q(u)=Q(u\tau)$, $[u,\tau]$ is singular, so $W$ is
totally singular. 

\begin{lemma} \label{lm:Usubmodules}
With the notation above, we have:
\begin{enumerate}[(i)]
\item $U$ has $3$ irreducible $L$-submodules, namely $U_1,U_2$ and $W$. 
\item $W$ is the unique proper $D$-submodule of $U$.
\item Let $P$ be a Sylow $5$-subgroup of $D$, $D_1=N_D(P)$. Then $D_1\cong
F_{20}$ and $U$ has precisely $3$ $D_1$-submodules $W,T_1,T_2$. 
\item The orbits of $D_1$ on $T_i$ have length $1,5,10$. In particular, each
member of $T_i$ is fixed by some involution of $D_1$. 
\end{enumerate}
\end{lemma}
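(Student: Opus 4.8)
The plan is to reduce all four assertions to the module structure of $N$ recorded in Lemma \ref{lm:S5modules}, treating them in turn. For (i), I would use that $U=U_1\oplus U_2$ with $U_1\cong U_2\cong N$ as $\mathbb F_2 L$-modules and that $N$ is \emph{absolutely} irreducible as an $A_5\cong L$-module by Lemma \ref{lm:S5modules}(iii). Hence $\End_{\mathbb F_2 L}(N)=\mathbb F_2$ and $U$ is a homogeneous semisimple $L$-module of length $2$, so its irreducible submodules correspond to the $\mathbb F_2$-lines of the $2$-dimensional multiplicity space $\operatorname{Hom}_L(N,U)\cong\mathbb F_2^2$. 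There are exactly $2^2-1=3$ such lines, realised by $U_1$, $U_2$ and the diagonal $W=\{u\oplus u\}$, and since every irreducible $L$-submodule is isomorphic to $N$ these are all of them. For (ii), a $D$-submodule is in particular an $L$-submodule, and by (i) together with semisimplicity the only nonzero proper $L$-submodules are $U_1,U_2,W$ (any two of which already span $U$). It then remains to check $t$-invariance, where $t$ acts by $(u_1\oplus u_2)t=u_2c\oplus u_1c$: this interchanges $U_1$ and $U_2$, so neither is a $D$-submodule, whereas $W$ is visibly $t$-invariant, hence the unique proper $D$-submodule.

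For (iii), $P\cong C_5$ is a Sylow $5$-subgroup of $D\cong S_5$, so $D_1=N_D(P)\cong F_{20}$. I would first identify $U$ as a $D$-module: by (ii) it is uniserial with unique proper submodule $W$, and both $W$ and $U/W$ are isomorphic to $N$ (on each, $L$ acts naturally and $t$ acts as the transposition $c$). Restricting to $D_1$, the module $N|_{D_1}$ is projective by Lemma \ref{lm:S5modules}(iv); since the group algebra $\mathbb F_2 F_{20}$ is self-injective, $N|_{D_1}$ is also injective, so the submodule $W\cong N$ splits off and $U|_{D_1}\cong N\oplus N$. As $N|_{D_1}$ is absolutely irreducible, the argument of (i) applies verbatim: $U|_{D_1}$ has precisely three irreducible (equivalently, proper nonzero) submodules, one of which is $W$; I name the other two $T_1,T_2$.

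For (iv), fix $i$ and identify $T_i\cong N$ as $F_{20}$-modules, so $|T_i|=16$. The orbit of length $1$ is $\{0\}$. Since $C_N(P)=0$ — the $5$-cycle $P$ fixes only the constant vectors, which lie outside the sum-zero heart — $P$ acts freely on the $15$ nonzero vectors, giving three $P$-orbits of length $5$. As $F_{20}\le S_5=O(N)$ preserves the quadratic form, it preserves the sets of $5$ singular and $10$ nonsingular vectors of Lemma \ref{lm:S5modules}(ii); the singular set is a single $P$-orbit and forms one $F_{20}$-orbit of length $5$, while the two remaining $P$-orbits are nonsingular. To finish I must show these two merge into one $F_{20}$-orbit of length $10$. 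The cleanest route is to rule out length-$5$ nonsingular orbits: such an orbit would have stabiliser of order $4$, hence a Sylow $2$-subgroup $C_4$ of $F_{20}$, so an order-$4$ element (a $4$-cycle in $S_5$) would fix a nonsingular vector; but the unique nonzero vector centralised by a $4$-cycle is the singular vector attached to its fixed point, a contradiction. Thus the nonsingular vectors form a single orbit of length $10$ and the orbit lengths are $1,5,10$. Each nonzero vector then has stabiliser of order $4$ or $2$ in $D_1$: an order-$2$ stabiliser is generated by an involution, and an order-$4$ stabiliser is a $C_4$ whose unique involution fixes the vector, so every member of $T_i$ is fixed by some involution of $D_1$.

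I expect step (iv) to be the main obstacle. Parts (i)--(iii) are formal once the absolute irreducibility and projectivity of $N$ from Lemma \ref{lm:S5modules} are in hand, but pinning down the $F_{20}$-orbit lengths genuinely uses the orthogonal geometry of $N$; the delicate point is the merging of the two nonsingular $P$-orbits, which I would secure either by the fixed-vector argument above or by an explicit computation of the conjugation action of $F_{20}$ on the ten transpositions of $S_5$.
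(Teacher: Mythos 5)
Your proof is correct, and for parts (i)--(iii) it is in substance the paper's argument: the paper likewise reduces everything to the fact that a direct sum of two copies of an absolutely irreducible module with endomorphism ring $\mathbb F_2$ has exactly three proper nonzero submodules, except that it cites \cite[(27.14)]{FGT} for this count where you prove it directly via the multiplicity space $\operatorname{Hom}_L(N,U)\cong\mathbb F_2^2$; and it also uses projectivity of $N_{F_{20}}$ from Lemma \ref{lm:S5modules}(iv) to split $U$ over $W$ as a $D_1$-module. Your only variant there is to split off the submodule $W$ by injectivity (invoking self-injectivity of $\mathbb F_2F_{20}$) rather than splitting the quotient map $U\to U/W$ using projectivity of $U/W\cong N$, which are interchangeable devices for group algebras. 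The genuine divergence is in (iv): the paper disposes of it in one line by noting that $T_i\cong W$ is the $5$-point permutation module modulo its center, whose $F_{20}$-orbits are the weight classes $\{0,5\}$, $\{1,4\}$, $\{2,3\}$ of sizes $1,5,10$ (by $2$-homogeneity of $F_{20}$), with the involution statement read off from point and pair stabilizers. You instead argue inside the orthogonal geometry of $N$: the free action of $P$ on the $15$ nonzero vectors, the singular vectors forming one orbit of length $5$, and the fixed-point argument that a $4$-cycle fixes only a singular vector, which rules out a nonsingular orbit of length $5$ and forces the two nonsingular $P$-orbits to fuse. Both routes are sound; the paper's identification is shorter but presupposes recognizing $W$ as the permutation module modulo the center, while your argument is self-contained and uses only the singular/nonsingular data already listed in Lemma \ref{lm:S5modules}(ii).
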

\begin{proof} By Lemma \ref{lm:S5modules}, $N$ is projective as $F_{20}$-module.
Since $U/W$ is a $4$-dimensional irreducible for $D_1$, $U_{D_1}$ splits over
$W_{D_1}\cong N_{D_1}$ and hence $U=W\oplus T_1$ with $D_1$-submodule $T_1$.
Again by Lemma \ref{lm:S5modules}, $N_L$ and $N_{D_1}$ are completely
irreducible, Schur's lemma then implies $\End_{\mathbb F_2L}(N)=\End_{\mathbb
F_2D_1}(N)=\mathbb F_2$. We can now apply \cite[(27.14)]{FGT} to obtain (i) and
(iii). (ii) follows from (i). Finally, (iv) holds since $T_i$ and $W$ are
$D_1$-isomorphic and $W$ is the permutation module modulo the center. 
\end{proof}

In the next lemma, we keep using the above notation.

\begin{lemma} \label{lm:technical}
\begin{enumerate}[(i)]
\item $\dim C_U(t)=4$, $C_U(t)+T_1=U$ and $C_U(t) \cap T_1=0$.
\item Under the action of $A_5$ on the submodules $W,U_1,U_2$ of $U$, the
lengths of the orbits are $1,5,10$. Let $S_0,S_1,S_2$ be the orbits of
length $5$ in $W,U_1,U_2$, respectively. Then $S=\{0\}\cup S_0 \cup S_1 \cup
S_2$ is a (nonlinear) $S_5$-invariant complement to $T_1$ in $U$, that is,
$S+T_1=U$.
\end{enumerate}
\end{lemma}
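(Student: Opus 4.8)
The plan is to realise $C_U(t)$, $T_1$ and $T_2$ explicitly inside the orthogonal space $U$ and then reduce each assertion to a short incidence count. For (i) I first record that, by the formula $(u_1\oplus u_2)t=u_2c\oplus u_1c$, a vector $u_1\oplus u_2$ is $t$-fixed precisely when $u_2=u_1c$; hence
\[
C_U(t)=\{\,u\oplus uc \mid u\in N\,\},\qquad \dim C_U(t)=4 .
\]
Since $c\in O(N)$ preserves the form, $Q(u\oplus uc)=Q(u)+Q(uc)=0$, so $C_U(t)$ is totally singular. As $U=U_1\perp U_2$ is an orthogonal sum of two minus-type planes it is of $+$-type of dimension $8$, so $C_U(t)$ is a maximal totally singular subspace. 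The same formula gives $C_U(t)\cap W=\{\,u\oplus u\mid uc=u\,\}$, the diagonal image of $C_N(c)$; as $c$ corresponds to a transposition it acts on $N\cong O_4^-(2)$ as the reflection in a nonsingular vector, so $\dim C_N(c)=3$ and $\dim\bigl(C_U(t)\cap W\bigr)=3$.

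Next I would bound $C_U(t)\cap T_1$. The natural map $C_U(t)\to U/W$ has kernel $C_U(t)\cap W$ of dimension $3$, so its image is one-dimensional; since $T_1\cap W=0$ by Lemma \ref{lm:Usubmodules}, the restriction $C_U(t)\cap T_1\to U/W$ is injective and lands in that one-dimensional image, whence $\dim\bigl(C_U(t)\cap T_1\bigr)\le 1$. To control the remaining case I would use the symmetry $\tau$: since $\tau^{-1}t\tau=t$ and $\tau$ centralises $L$, it centralises $D$, and as $\tau$ fixes $W$ pointwise while permuting the three $D_1$-submodules $W,T_1,T_2$, it must interchange $T_1$ and $T_2$. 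Because $\tau$ also preserves $C_U(t)$, this gives $\dim\bigl(C_U(t)\cap T_1\bigr)=\dim\bigl(C_U(t)\cap T_2\bigr)$.

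The remaining step, excluding $\dim\bigl(C_U(t)\cap T_1\bigr)=1$, is the one I expect to be the real obstacle. It cannot follow from orthogonal geometry alone: a maximal totally singular subspace of $O_4^-\perp O_4^-$ may well meet a nondegenerate summand in a point, and even the isometry type of $T_1$ is not pinned down by the orbit data (the lengths $1,5,10$ of Lemma \ref{lm:Usubmodules} are compatible both with $T_1$ totally singular and with $T_1\cong O_4^-(2)$). Thus one must use the genuine position of $C_U(t)$, defined by the transposition $t$, relative to $T_1$, defined by $D_1\cong F_{20}$. I would settle it by coordinatising $N$ as the natural $O_4^-(2)$-module (for instance $N=\mathbb F_4^{2}$ with $L=SL(2,4)$ and $D=SL(2,4)\rtimes\langle\sigma\rangle$), constructing $T_1$ as the kernel of the $D_1$-equivariant projection $U\to W$ afforded by the projectivity of $N_{F_{20}}$ in Lemma \ref{lm:S5modules}, and checking that no singular vector of $T_1$ lies in $C_U(t)$. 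Once $C_U(t)\cap T_1=0$, the equalities $\dim C_U(t)=\dim T_1=4$ force $C_U(t)+T_1=U$.

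For (ii) the first assertion is immediate: each of $W,U_1,U_2$ is an $L\cong A_5$-module isomorphic to $N$, and $A_5$ is transitive on the five singular and on the ten nonsingular nonzero vectors of $N$, giving orbit lengths $1,5,10$. Let $S_0,S_1,S_2$ be the length-five $A_5$-orbits in $W,U_1,U_2$, and let $\phi\colon U\to W$ be the projection with kernel $T_1$ afforded by $U=W\oplus T_1$, so that $\phi|_W=\mathrm{id}$. Since $\phi$ has kernel $T_1$, the identity $S+T_1=U$ is equivalent to $\phi(S)=W$, and as $|S|=16=|W|$ this means $\phi|_S$ is a bijection. Because $\phi$ fixes the six vectors $\{0\}\cup S_0\subseteq W$, it amounts to checking that $\phi$ maps $S_1\cup S_2$ bijectively onto the remaining ten vectors of $W$. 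In turn I would verify $U_1\cap T_1=U_2\cap T_1=0$, so that $\phi$ is injective on each $U_i$ and $|\phi(S_i)|=5$, and that the two image sets are disjoint from each other and from $\{0\}\cup S_0$. These are finite verifications in the explicit model of (i), which I expect to resolve (i) and (ii) simultaneously; the coordinate description of $T_1$ is the only non-structural ingredient.
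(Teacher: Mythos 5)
Your setup is sound as far as it goes: the identification $C_U(t)=\{u\oplus uc\mid u\in N\}$ and the dimension count, the bound $\dim\bigl(C_U(t)\cap T_1\bigr)\le 1$ via the quotient $U/W$, the observation that $\tau$ must swap $T_1$ and $T_2$, and the reduction of (ii) to injectivity on $S$ of the projection $\phi\colon U\to W$ along $T_1$ are all correct. But the proposal stops exactly where the lemma lives. Both pivotal claims --- $C_U(t)\cap T_1=0$ in (i), and the pairwise disjointness of $\phi(S_1)$, $\phi(S_2)$, $\{0\}\cup S_0$ in (ii) --- are deferred to ``finite verifications in the explicit model'' that are never exhibited; you yourself flag the first as ``the real obstacle.'' A plan that coordinatises $N$, constructs $T_1$ as the kernel of an explicit $D_1$-equivariant splitting, and then checks incidences is not a proof until those computations are actually carried out, and producing the splitting by hand is itself a nontrivial piece of work. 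So there is a genuine gap at the two steps that matter.

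For comparison, the paper closes both gaps structurally, with no coordinates. For (i): choose an involution $b\in D_1$ with $b^+=(12)(35)$, commuting with both $t$ and $\tau$, and the element $\tilde c$ acting as $t^+$ on $U_1$ and trivially on $U_2$; then $\tilde c$ conjugates $\langle \tau,b\rangle$ to $E_1=\langle t,b\rangle$, so $\dim C_U(E_1)=\dim C_U(\langle\tau,b\rangle)=\dim C_N(b^+)=2$, while $\dim C_W(E_1)\ge 2$, forcing $C_U(E_1)\le W$. If $C_{T_1}(t)\neq 0$, then $b$ (which normalizes $T_1$ and commutes with $t$) acts on the nonzero $\mathbb F_2$-space $C_{T_1}(t)$ and hence fixes a nonzero vector in it, giving $0\neq C_{T_1}(E_1)\le W\cap T_1=0$, a contradiction; this is precisely the step your $\tau$-symmetry argument cannot reach, since equality of the two intersection dimensions does not exclude the value $1$. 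For (ii): by Lemma \ref{lm:Usubmodules}(iv) every element of $T_1$ is fixed by some involution $a$ of $D_1$, while each point of $S_i$ is the \emph{unique} fixed point of the relevant involution of $D_1\cap L$ in its orbit; assuming $x+y\in T_1$ with distinct $x,y\in S$ and chasing the fixed points of $a$ (using $a\tau=\tau a$, $T_1\cap W=0$ and $T_1\cap U_i=0$) produces a contradiction in each case. That fixed-point count is exactly the missing content of your disjointness claim, so your reduction is compatible with the paper's proof --- it just lacks the argument that does the work.
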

\begin{proof} (i) We have $D=L\langle t \rangle \cong S_5$. Let us denote the element
of $S_5$ corresponding to $a\in D$ by $a^+$, w.l.o.g. we can assume $t^+=(12)$
and $P^+=\langle (12345) \rangle$. Then $D_1^+=\langle (12345), (1325)
\rangle$. Let $b\in D$ such that $b^+=(12)(35)$. Then $b \in D_1$ and $b$
commutes with $t$ and $\tau$. Record that the action of $b,t,\tau$ on $U$ is
\begin{eqnarray*}
b&:&u_1\oplus u_2 \mapsto u_1b^+\oplus u_2b^+,\\
\tau&:&u_1\oplus u_2 \mapsto u_2\oplus u_1,\\
t&:&u_1\oplus u_2 \mapsto u_2t^+\oplus u_1t^+.
\end{eqnarray*}
Define the element $\tilde c \in B$ by 
\[\tilde c:u_1\oplus u_2 \mapsto u_1t^+\oplus u_2.\]
Then $\tilde c$ commutes with $b$ and $\tau^{\tilde c}=t$. Put $E_1=\langle t,b
\rangle$, clearly $E_1^{\tilde c}=\langle \tau,b\rangle$. On the one hand, we
have
\begin{eqnarray*}
\dim_{\mathbb F_2}(C_U(E_1)) &=& \dim (C_U(\tau,b)) \\
&=& \dim C_{C_U(b)}(\tau) \\
&=& \dim C_{C_{U_1}(b) \oplus C_{U_2}(b)}(\tau) \\
&=& \dim C_{U_1}(b) \\
&=& \dim C_N(b^+)=2.
\end{eqnarray*}
We show on the other hand that $\dim_{\mathbb F_2}(C_W(E_1))\geq 2$. Indeed, the
$S_5$-modules $W$ and $N$ are isomorphic and $\dim(C_W(t))=\dim(C_N(t^+))=3$.
Then $C_W(E_1)=C_{C_W(t)}(b)$ is of rank at least $\dim(C_W(t))/2=3/2$. 

Now, from $\dim(C_W(E_1))\geq 2$ follows $C_U(E_1)\leq W$. However if
$C_{T_1}(t)\neq 0$ then $C_{T_1}(E_1)\neq 0$, contradicting $T_1\cap W=0$ and
$C_U(E_1)\leq W$. 

(ii) We have seen that $S_i$ is the set of singular points in $U_i$ for
$i=1,2$. The action of $D_1\cong F_{20}$ on $S_i$ is its natural $2$-transitive
action on $5$ points. $D_1\cap L$ contains precisely $5$ involutions and each
member of $S_i$ is fixed by exactly one involution of $D_1 \cap L$. Moreover,
each member of $T_1$ is fixed by some involution of $D_1$. 

We have to show that for distinct $x,y\in S$, $x+y\not\in T_1$; then $S$ is a
complement to $T_1$ by an order argument. Assume $x+y\in T_1$ and denote by $a$
an involution of $D_1$ fixing $x+y$. Then $x,y$ are the projections of $x+y$ on
$S_i,S_j$, so $a$ fixes the projections $x$ and $y$. As $T_1\cap U_i=0$ for
$0\leq i\leq 2$, $x\in S_i$ and $y \in S_j$ for some $i\neq j$. If $x\in S_1$
and $y\in S_2$, then $x$ and $y$ are the unique fixed points of $a$ in
$S_1,S_2$. By $a\tau=\tau a$, $x\tau = y$ holds, and hence $x+y=[x,\tau]\in W$,
contradicting $T_1\cap W=0$. 

Thus we may take $x\in S_0$ and $y\in S_1$. Then $x=x_1+x_2$ with $x_i=S_i$, and
as $x\in S_0$, $x_2=x_1\tau$ with $x_i\in S_i$. Now, $x_i$ is the unique fixed
point of $a$ in $S_i$ and $y$ the unique fixed point of $a$ in $S_1$, so $y=x_1$
and $x+y=x_1+x_2+y=x_2\in T_1\cap U_2=0$, a contradiction. 
\end{proof}

\section{An infinite family of simple Bol loops of exponent 2}

In this section, $G$ denotes a group satisfying condition (*) of Lemma
\ref{lm:25S5_struct}, $H$ is the normalizer of a $5$-Sylow $P$ of $G$, $c$ an
involution from $G\setminus [G,G]J$. The $S_5$-modules $N$ and $U$ are defined
as in Section \ref{sec:S5modules}. Also $U=U_1 \oplus U_2=T_1 \oplus T_2$ where
$U_1,U_2$ are $A_5$-submodules, and $T_1,T_2$ are $F_{20}$-submodules. Moreover,
$U_1,U_2,T_1,T_2$ are different from the unique $S_5$-submodule $W$ of $U$.
All these subspaces are irreducible $\mathbb F_2P$-modules, which implies
$U_i \cap T_j =0$.

Let us fix a positive integer $k$ and put 
\[\mathscr U=U^k,\; \mathscr U_i=U_i^k,\; \mathscr T_i=T_i^k,\; \mathscr W =
W^k.\]
Clearly, $\mathscr W$ is a $S_5$-submodule and $\mathscr U=\mathscr U_1 \oplus
\mathscr U_2=\mathscr T_1 \oplus \mathscr T_2$. We write $\mathscr{G}=G\ltimes
\mathscr U$ where $J\triangleleft G$ acts trivially on the $S_5$-module
$\mathscr U$. Moreover, 
\[\mathscr J=O_2(\mathscr G)=\langle J, \mathscr U \rangle.\] 
We will consider the elementary Abelian subgroup $\mathscr J$ of $\mathscr G$ as
an $S_5$-module over the field $\mathbb F_2$. In particular, with some abuse of
notation, we will denote the group operation on $\mathscr J$ additively and
write $\mathscr J = J+\mathscr U$, etc. It is easy to see that 
\[\mathscr G/\mathscr J\cong S_5 \mbox{ and } \soc(\mathscr G)= J_0+\mathscr
W.\]
Moreover, since $J_0$ and $W$ are non-isomorphic $S_5$-modules, $J_0\oplus W$
does not contain diagonal submodules. This implies that for any minimal
submodule $M$ of $\mathscr J$, we either have $M=J_0$ or $M\leq \mathscr W$. 

The action of the involution $c\in G\setminus J$ on $\mathscr U$ equals the
action of the transposition $(12) \in S_5$, hence $c$ interchanges $\mathscr
U_1, \mathscr U_2$. This implies $|C_{\mathscr U}(c)|=16^k$ and $|c^{\mathscr
U}|=16^k$; that is, $\mathscr U$ is transitive on the involutions in $c\mathscr
U$. As $G\cong \mathscr G/\mathscr U$ is transitive on the $80$ involutions in
$G\setminus J$, and as $\mathscr U$ is transitive on the $16^k$ involutions on
$c\mathscr U$, $\mathscr G$ is transitive on the $80\cdot 16^k$ involutions in
$\mathscr G \setminus \mathscr J$. 

Using Aschbacher's Main Theorem \cite{Aschbacher1} we conclude that a Bol loop
folder $(\mathscr G, \mathscr H, \mathscr K)$ must have the following
properties: The index of $\mathscr H$ has to be $96\cdot 16^k$, that is,
$\mathscr H$ must have order $40\cdot 16^k$. The set of involutions $\mathscr K$
is the union of $c^{\mathscr G}$ and $\mathscr K \cap \mathscr J$, thus,
$|\mathscr K \cap \mathscr J|=16\cdot 16^k$.

There are very many possible choices for $\mathscr H$ and $\mathscr K$.The most
obvious choice is the following.
\begin{proposition} \label{pr:splitcase}
Put $\mathscr H=H\ltimes \mathscr T_1$ and $\mathscr K=c^{\mathscr G} \cup
(J_0\oplus \mathscr W)$. Then $(\mathscr G, \mathscr H, \mathscr K)$ is a Bol
loop folder. Moreover, the homomorphism $\mathscr G \to G$ with kernel $\mathscr
U$ induces a surjective homomorphism between the loop folders $(\mathscr G,
\mathscr H, \mathscr K)$ and $(G,H,K)$. In other words, the Bol loop
corresponding to $(\mathscr G, \mathscr H, \mathscr K)$ is an extension of the
elementary Abelian group of order $2^{2k}$ by the loop corresponding to
$(G,H,K)$. 
\end{proposition}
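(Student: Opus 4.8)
The plan is to obtain the folder property from Proposition \ref{pr:general}, applied to $\mathscr G$ with $\mathscr J=O_2(\mathscr G)$, $\mathscr H=H\ltimes\mathscr T_1$, $K_0=J_0\oplus\mathscr W$ and $K_1=c^{\mathscr G}$. Several hypotheses are immediate from the data already assembled: $\mathscr G/\mathscr J\cong S_5$; $K_0=\soc(\mathscr G)$ is characteristic, hence $\mathscr G$-invariant, and lies in the elementary abelian group $\mathscr J$, so $K_0\setminus\{1\}$ consists of involutions. Conditions (b) and (c) are then pure counting: $n_0=|J_0\oplus\mathscr W|=16\cdot16^k$, the $80\cdot16^k$ involutions of $c^{\mathscr G}$ project onto the $10$ transpositions of $S_5$ so that $n_1=8\cdot16^k$ and $n_0=2n_1$, and $|\mathscr G^+:\mathscr H^+|=|S_5:F_{20}|=6$. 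For (e), and to identify $K_1$, I would use that $L=\mathscr G'\mathscr J$ maps onto $A_5$ while every involution of $\mathscr G\setminus\mathscr J$ lies in $c^{\mathscr G}$ and maps onto a transposition: such involutions avoid $L$, so the involutions of $\mathscr G\setminus L$ are exactly $c^{\mathscr G}=K_1$, and every involution of $L$ lies in $\mathscr J$.

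For (a) I would display the splitting of $\mathscr J$ as an $\mathbb F_2$-space. The involution $a_0$ spanning $H\cap J$ is the unique $P$-fixed point of $J$ by Lemma \ref{lm:Gprops}(iii), hence lies outside $J_0$, so $J=J_0\oplus\langle a_0\rangle$; and $U=W\oplus T_1$ by Lemma \ref{lm:Usubmodules}, whence $\mathscr U=\mathscr W\oplus\mathscr T_1$ and $\mathscr H\cap\mathscr J=\langle a_0\rangle\oplus\mathscr T_1$. Combining these,
\[\mathscr J=J\oplus\mathscr U=(J_0\oplus\mathscr W)\oplus(\langle a_0\rangle\oplus\mathscr T_1)=K_0\oplus(\mathscr H\cap\mathscr J),\]
so $K_0$ is a complement to $\mathscr H\cap\mathscr J$ in the elementary abelian group $\mathscr J$; thus $(\mathscr J,\mathscr H\cap\mathscr J,K_0)$ is the folder of the elementary abelian loop $\mathscr J/(\mathscr H\cap\mathscr J)$, giving (a).

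The hard part will be condition (d), that $C_{\mathscr H\cap\mathscr J}(a)=1$ for every $a\in c^{\mathscr G}$. I would write a fixed element of $\mathscr H\cap\mathscr J$ as $\varepsilon a_0+v$ with $\varepsilon\in\mathbb F_2$ and $v\in\mathscr T_1$; since $J$ and $\mathscr U$ are both $\mathscr G$-submodules of $\mathscr J$, the condition $(\varepsilon a_0+v)^a=\varepsilon a_0+v$ splits into $\varepsilon a_0^a=\varepsilon a_0$ in $J$ and $v^a=v$ in $\mathscr U$. On $J$ the element $a$ acts through its image $\bar a\in c^G$, and since no member of $c^G$ centralises $a_0$ (this is the verification of (d) in the proof of Theorem \ref{th:firstexample}), $\varepsilon=1$ is impossible; thus $\varepsilon=0$. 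On $\mathscr U=U^k$ the element $a$ acts through the transposition $a^+$, so $v^a=v$ forces $v\in C_{\mathscr T_1}(a^+)=C_{T_1}(a^+)^{\,k}$, and it remains to show $C_{T_1}(\sigma)=0$ for every transposition $\sigma$. For $\sigma=(12)=t^+$ this is exactly $C_U(t)\cap T_1=0$ from Lemma \ref{lm:technical}(i). As $\mathscr H^+=F_{20}$ is sharply $2$-transitive on $5$ points it is transitive on the $10$ transpositions and leaves $T_1$ invariant, so picking $\gamma\in F_{20}$ with $(12)^\gamma=\sigma$ gives $C_U(\sigma)\cap T_1=(C_U(t)\cap T_1)\gamma=0$. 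Hence $v=0$, the fixed element is trivial, (d) holds, and Proposition \ref{pr:general} yields that $(\mathscr G,\mathscr H,\mathscr K)$ is a Bol loop folder of exponent $2$.

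For the second statement, let $\pi\colon\mathscr G\to G$ be the projection with kernel $\mathscr U$. Then $\pi(\mathscr H)=H$ and $\pi(\mathscr K)=\pi(J_0\oplus\mathscr W)\cup\pi(c^{\mathscr G})=J_0\cup c^G=K$, while $\pi(\mathscr Hx)=H\pi(x)$ for all $x$; hence $\phi:=\pi|_{\mathscr K}\colon\mathscr K\to K$ is a well-defined surjection, and from $H\,\phi(x\circ y)=H\pi(x)\pi(y)=H\,\phi(x)\phi(y)$ together with uniqueness of the representative in the transversal $K$ one obtains $\phi(x\circ y)=\phi(x)\circ\phi(y)$. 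Thus $\phi$ is a surjective homomorphism of loops, and its kernel is $\mathscr K\cap\mathscr U=(J_0\oplus\mathscr W)\cap\mathscr U=\mathscr W$, an elementary abelian $2$-group of order $16^k$ on which $\circ$ restricts to addition. Therefore the loop of $(\mathscr G,\mathscr H,\mathscr K)$ is an extension of $\mathscr W$ by the loop of $(G,H,K)$, as required. The only genuinely delicate point is the last clause of (d), where one must pass from the single transposition $t^+$ handled by Lemma \ref{lm:technical}(i) to all transpositions; everything else reduces to the module decompositions and the counting of the three preceding conditions.
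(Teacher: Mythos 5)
Your proof is correct and follows the paper's own route: both apply Proposition \ref{pr:general}, verify (b) and (c) by the same counts, and base (d) on $C_U(t)\cap T_1=0$ from Lemma \ref{lm:technical}(i). You are in fact more thorough than the paper at the two places where it is silent: the paper checks (d) only for the single involution $c$ (via $C_{\mathscr H\cap \mathscr J}(c)=C_{H\cap J}(c)=1$), although hypothesis (d) is required for every $a\in c^{\mathscr G}$, and your reduction of all transpositions to $t^{+}$ using the transitivity of $F_{20}=\mathscr H^{+}$ on transpositions together with the $F_{20}$-invariance of $T_1$ is exactly the conjugation argument needed to close that gap; likewise the paper gives no argument at all for the final clause, whereas you exhibit the folder morphism and identify its loop kernel. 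One further point your computation reveals: the kernel $\mathscr K\cap\mathscr U=\mathscr W$ has order $16^{k}=2^{4k}$ (consistent with the loop having order $96\cdot 16^{k}$ and the quotient order $96$), so the ``$2^{2k}$'' in the statement of the proposition is a typo for $2^{4k}$, and your value is the correct one.
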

\begin{proof} We apply Proposition \ref{pr:general}, (a), (c) and (e) are trivial. (b)
follows from 
\[n_1=|c^{\mathscr G}\cap c\mathscr J|=|C_{\mathscr J}(c)|=8|C_{\mathscr
  U}(c)|=8\cdot16 ^k\]
and $n_0=|J_0||\mathscr W|=16\cdot 16^k=2n_1$. For (d), we use
\[C_{\mathscr H\cap \mathscr J}(c)=C_{H\cap J+\mathscr T_1}(c)=C_{H\cap
  J}(c)=1,\] 
as by Lemma \ref{lm:technical}, $\mathscr T_1\cap \mathscr T_1^c=0$. 
\end{proof}

\medskip

In the rest of this section, for each integer $k\geq 1$, we modify these
$\mathscr H$ and $\mathscr K$ such that the resulting loop will be simple.  Let
$U^*$ be a copy of $U$ in $\mathscr U$ such that $\mathscr U=U^*\oplus
U^{k-1}$. We denote the subspaces corresponding to $T_i,U_i,W$ by $T_i^*, U_i^*,
W^*$. Let us define the set $S\subseteq U$ as in Lemma \ref{lm:technical}(ii)
and let $S^*$ be the corresponding subset of $U^*$. In order to construct the
new $\mathscr K$, we simply replace $W^*$ by $S^*$.

Let $\psi:J_0\to T_1^*$ be an isomorphism of $F_{20}$-modules and define
\[ \mathscr T_\psi = \{ v+\psi(v)+u \mid v \in J_0, u \in T_1^{k-1}\}. \]
Then $\mathscr T_\psi$ is normalized by $H$ and we define the new subgroup
$\mathscr H$ of $\mathscr G$ by $\mathscr H = H \ltimes \mathscr T_\psi$.

\begin{theorem}
Let $\mathscr H=H \ltimes \mathscr T_\psi$, $\widetilde{\mathscr W}=(\mathscr W
\setminus W^*) \cup S^*$ and $\mathscr K=c^{\mathscr G}\cup (J_0+
\widetilde{\mathscr W})$. Then the triple $(\mathscr G, \mathscr H, \mathscr K)$
is a Bol loop folder such that the corresponding Bol loop is simple of exponent
$2$.
\end{theorem}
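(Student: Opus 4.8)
The proof splits naturally into two halves: first showing that the modified triple $(\mathscr G, \mathscr H, \mathscr K)$ is still a Bol loop folder of exponent $2$, and second showing that the resulting loop is simple. For the first half, the plan is to invoke Proposition \ref{pr:general} just as in Proposition \ref{pr:splitcase}, but now one must recheck hypotheses (a)--(e) for the perturbed data. Conditions (c) and (e) are unchanged and hold as before. For the counting conditions, observe that replacing $W^*$ by $S^*$ does not change cardinalities: since $|S^*| = |W^*| = 16^k / 16^{k-1}\cdot\ldots$ — more precisely $S^*$ is a complement to $T_1^*$ in $U^*$ by Lemma \ref{lm:technical}(ii), so $|S^*| = |W^*| = |U^*|/|T_1^*|$, whence $|\widetilde{\mathscr W}| = |\mathscr W|$ and so $n_0 = |J_0|\,|\widetilde{\mathscr W}| = 16\cdot 16^k = 2n_1$, giving (b). Similarly passing from $\mathscr T_1$ to the "twisted" $\mathscr T_\psi$ does not alter $|\mathscr H|$, since $\psi$ is a bijection $J_0 \to T_1^*$; so $\mathscr H \cap \mathscr J = \mathscr T_\psi$ has the same order as $\mathscr T_1$.

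**Checking (a) and (d).** The delicate hypotheses are (a), that $(\mathscr J, \mathscr H \cap \mathscr J, \mathscr K \cap \mathscr J)$ is a Bol loop folder of exponent $2$, and (d), that $C_{\mathscr H \cap \mathscr J}(c) = 1$. For (a) I must verify that $\mathscr K \cap \mathscr J = J_0 + \widetilde{\mathscr W}$ is a set of coset representatives for $\mathscr T_\psi$ in $\mathscr J$; this amounts to showing the difference of two distinct elements of $\widetilde{\mathscr W}$ never lands in $\mathscr T_\psi$. The essential new content here is the interaction of the diagonal subgroup $\mathscr T_\psi$ with the nonlinear set $S^*$: using Lemma \ref{lm:technical}(ii), distinct elements of $S^*$ have difference outside $T_1^*$, and the isomorphism $\psi$ only couples the $J_0$ and $T_1^*$ coordinates, so one argues that the $\mathscr W$-part and the $S^*$-part cannot conspire to enter the twisted complement. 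For (d), I expect to reuse that $\mathscr T_\psi$ meets its $c$-conjugate trivially: since $\psi$ is an $F_{20}$-isomorphism and $c$ swaps $U_1^*, U_2^*$ while $T_1^* \cap (T_1^*)^c = 0$ by Lemma \ref{lm:technical}(i), the fixed space of $c$ on $\mathscr T_\psi$ is trivial.

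**The simplicity argument.** This is where I expect the main work, and it is the heart of the theorem. A normal subloop corresponds (via the Baer correspondence) to a normal subgroup of $\mathscr G$ contained in the appropriate structure; concretely one must rule out proper nontrivial normal subloops. By the earlier remark, every minimal submodule $M$ of $\mathscr J$ satisfies either $M = J_0$ or $M \leq \mathscr W$. The plan is to show that a proper normal subloop would force the folder to contain, or be contained in, a $\mathscr G$-invariant piece of $\mathscr J$ that is compatible with $\mathscr K$, and then to derive a contradiction from the \emph{nonlinearity} of $S^*$. The point of replacing $W^*$ by $S^*$ is precisely to destroy any $S_5$-invariant linear complement through which $\mathscr W$ could split off as a normal subloop: the set $\widetilde{\mathscr W}$ is no longer a subgroup, so $\mathscr W$ (or the submodules inside it) cannot be "folded out." I would argue that any normal subloop projects to a normal subloop of the order-$96$ loop of Theorem \ref{th:firstexample}, which is simple, so the projection is trivial or everything; the nontrivial case would place the normal subloop inside $\mathscr U$-related submodules, and there the twist by $\psi$ together with the nonlinear $S^*$ obstructs invariance.

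**The main obstacle.** The hard part will be the simplicity proof, specifically translating the loop-theoretic notion of normal subloop into a statement about $\mathscr G$ and $\mathscr K$ that I can attack with the module theory of Section \ref{sec:S5modules}. The two structural facts I most expect to lean on are: that the only minimal submodules are $J_0$ and those inside $\mathscr W$, and that $S^*$ is an $S_5$-invariant but nonlinear complement to $T_1^*$. The decisive step is to show that no nonzero $\mathscr G$-invariant subspace of $\mathscr J$ can be "consistent" with the twisted folder $\mathscr K$, i.e. that quotienting by such a subspace would require $\widetilde{\mathscr W}$ to descend to a genuine subgroup in the quotient, which fails exactly because of the $S^*$ substitution. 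Making this rigorous will require carefully tracking how a normal subloop intersects $\mathscr J$ and how the diagonal $\mathscr T_\psi$ and the nonlinear $\widetilde{\mathscr W}$ behave under the induced action.
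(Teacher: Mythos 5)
Your first half (the folder verification) follows the paper's own route: invoke Proposition~\ref{pr:general}, observe that only (a) and (d) need new work, and use $|S^*|=|W^*|$ so that the counting in (b) is unchanged. One slip to fix: $\mathscr H\cap\mathscr J$ is $(H\cap J)+\mathscr T_\psi$, not $\mathscr T_\psi$ alone; without the $H\cap J$ factor the orders $|\mathscr K\cap\mathscr J|\cdot|\mathscr H\cap\mathscr J|$ do not reach $|\mathscr J|$, so your coset-representative claim as stated cannot hold. The paper verifies (a) by a chain of subspace identities: first $J_0+\mathscr W$ is a complement of $(H\cap J)+\mathscr T_\psi$ (using $J_0+\mathscr T_\psi=J_0+\mathscr T_1$), and then the replacement of $W^*$ by $S^*$ preserves the complement property because $W^*+T_1^*=S^*+T_1^*=U^*$ by Lemma~\ref{lm:technical}(ii). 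Your sketch is compatible with this and could be completed.

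The genuine gap is in the simplicity argument. Your pivotal step --- ``any normal subloop projects to a normal subloop of the order-$96$ loop of Theorem~\ref{th:firstexample}, which is simple'' --- presupposes a surjective loop homomorphism from $\mathscr Q$ onto that loop. For the twisted folder no such homomorphism exists, and it \emph{cannot} exist: if it did, its kernel would be a nontrivial proper normal subloop of order $16^k$, contradicting the very simplicity you are trying to prove. Concretely, the group projection $\mathscr G\to G$ with kernel $\mathscr U$ sends $\mathscr H=H\ltimes\mathscr T_\psi$ onto $HJ_0$ (since $\mathscr T_\psi$ projects onto $J_0$), and $(G,HJ_0,K)$ is not a loop folder because $|G:HJ_0|=6\neq 96=|K|$; equivalently, $\core_{\mathscr G}(\mathscr H\mathscr U)\supseteq J_0+\mathscr U\neq\mathscr U$, so $\mathscr U$ is not the kernel of any folder morphism. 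The projection argument is available precisely in the split case of Proposition~\ref{pr:splitcase}, whose loop is indeed \emph{not} simple; destroying that projection is the whole point of the twist $\psi$ and of $S^*$. What is actually needed (and what the paper does): a nontrivial quotient gives a folder morphism $\alpha$ with kernel $\mathscr N\leq\mathscr J$ satisfying $\core_{\mathscr G}(\mathscr H\mathscr N)=\mathscr N$, and one splits into two cases. If $J_0\leq\mathscr N$, then $J\leq\mathscr N$ and connectivity of the commuting graph of transpositions in $S_5$ forces $[c,\mathscr G]\leq\mathscr N$, contradicting $\mathscr N\leq\mathscr J$. If $J_0\nleq\mathscr N$, a minimal normal subgroup inside $\mathscr N$ lies in $\mathscr W$; one then picks $s=w+t\in S^*\setminus W^*$ (with $0\neq w\in W^*$, $0\neq t\in T_1^*$), notes that $\psi^{-1}(t)+t\in\mathscr T_\psi\leq\mathscr H$ while $s+\psi^{-1}(t)\in\mathscr K$, and uses uniqueness of $\mathscr H^\sharp\mathscr K^\sharp$-decompositions in the image to force $\psi^{-1}(t)+t\in\mathscr N$; the $S_5$-submodule generated by this element contains $J_0$ (because $J_0$ is not $S_5$-isomorphic to any submodule of $U^*$), a contradiction. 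Your appeal to ``nonlinearity obstructing invariance'' points in the right direction but contains none of these steps; in particular the commuting-graph case and the double-decomposition trick, which are the actual content of the proof, are missing.
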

\begin{proof} Again, the first statement follows from Proposition \ref{pr:general} and
Lemma \ref{lm:technical}; one needs to verify hypothesis (a) and (d) of the
Proposition only. Again (d) follows from $T_1\cap T_1^c=0$. 

We prove (a) by showing that $\mathscr K_0= \mathscr K \cap \mathscr H \mathscr
J$ is a transversal to $\mathscr H$ in $\mathscr H \mathscr J$. Since $\mathscr
K_0 \subset \mathscr J$, this is equivalent with the fact that $\mathscr
K_0=J_0+\widetilde{\mathscr W}$ is a complement of the subspace
\[\mathscr H \cap \mathscr J=H\cap J+\mathscr T_\psi\] in
$\mathscr J$, \textit{i.e.}
\[\mathscr K_0 + \mathscr H \cap \mathscr J = \mathscr J.\]
In order to show this, we use the identities
\begin{eqnarray*} 
\mathscr W+\mathscr T_1&=&\mathscr U, \\
J_0+\mathscr T_\psi &=& J_0+\mathscr T_1, \\
J_0+\mathscr H \cap \mathscr J &=& J_0+H\cap J+\mathscr T_\psi \\
&=& J_0+H\cap J+\mathscr T_1.
\end{eqnarray*}
Then
\begin{eqnarray*} 
J_0+\mathscr W+ \mathscr H \cap \mathscr J&=& J_0+\mathscr W + H\cap J+ \mathscr
T_\psi \\
&=& J_0+\mathscr W + H\cap J+ \mathscr T_1 \\
&=& J_0+\mathscr U+H\cap J\\
&=& J+\mathscr U\\
&=&\mathscr J,
\end{eqnarray*}
that is, $J_0+\mathscr W$ is a complement to $\mathscr H \cap \mathscr J$ in
$\mathscr J$ by the order argument $|J_0+\mathscr W||\mathscr H\cap \mathscr
J|=|\mathscr J|$. 

We constructed $\widetilde{\mathscr W}$ from $\mathscr W$ by
deleting the minimal submodule $W^*$ and replacing it by $S^*$. Therefore,
it is enough to show that this deformations of $\mathscr W$ do not change the
property of being a complement. 
\begin{eqnarray*}
J_0+W^*+\mathscr H \cap \mathscr J &=& J_0+W^*+H\cap J+\mathscr T_\psi \\
&=& H\cap J+J_0+W^*+\mathscr T_1 \\
&=& H\cap J+J_0+W^*+T_1^*+\mathscr T_1 \\
&=& H\cap J+J_0+S^*+T_1^*+\mathscr T_1 \\
&=& J_0+S^*+H\cap J+\mathscr T_\psi \\
&=& J_0+S^* +\mathscr H \cap \mathscr J
\end{eqnarray*}
This proves (a), hence $(\mathscr G, \mathscr H, \mathscr K)$ is a Bol loop folder.

It remains to show that the Bol loop $\mathscr Q$ corresponding to $(\mathscr G,
\mathscr H, \mathscr K)$ is simple. Let us therefore assume that $\mathscr Q\to
\mathscr Q^\sharp$ is a nontrivial surjective loop homomorphism and let
$(\mathscr G^\sharp, \mathscr H^\sharp, \mathscr K^\sharp)$ be the loop folder
of $\mathscr Q^\sharp$. Then we have a surjective homomorphism $\alpha:\mathscr
G \to \mathscr G^\sharp$ with $\alpha(\mathscr H)=\mathscr H^\sharp$ and
$\alpha(\mathscr K)=\mathscr K^\sharp$. Let $\mathscr N=\ker \alpha$ and
$c^\sharp=\alpha(c)=c\mathscr N$. On the one hand, $\mathscr H^\sharp$ is
core-free, thus,
\begin{equation} \label{eq:N=core}
\core_{\mathscr G}(\mathscr H \mathscr N)=\mathscr N.
\end{equation}
On the other hand, $\mathscr N \leq \mathscr J$ since otherwise $\mathscr H
\mathscr N=\mathscr G$ and $\mathscr Q^\sharp=1$. 

Let us first assume that $J_0\leq \mathscr N$. Since
\[J/J_0 \leq Z(\mathscr G/J_0) \lhd \mathscr G/J_0 \mbox{ and } J/J_0 \leq
\mathscr H \mathscr N/J_0,\] 
we have $J\leq \mathscr N$ by \eqref{eq:N=core}. In this case the image
$G^\sharp$ of $G\leq \mathscr G$ is a homomorphic image of $G/J\cong
S_5$. Furthermore, if $[c^\sharp, {c^\sharp}^g]=1$ then $c^\sharp {c^\sharp}^g$
normalizes a Sylow $5$-subgroup of $G^\sharp$, thus, $c^\sharp {c^\sharp}^g$
is contained in a conjugate of $\mathscr H^\sharp$, and hence
$c^\sharp={c^\sharp}^g$ in this case. As the commuting graph of transpositions
in $S_5$ is connected, $c^\sharp= {c^\sharp}^g$ for all $g$. This means
$[c,\mathscr G]\leq \mathscr N$, contradicting to $\mathscr N\leq \mathscr J$. 

Let us now assume $J_0 \nleq \mathscr M$ and let $M$ be a minimal normal
subgroup of $\mathscr G$ contained in $\mathscr N$. Then $M\leq \soc(\mathscr
G)=J_0 + \mathscr W$. Since $J_0$ and $W$ are non-isomorphic $S_5$-modules,
$J_0+\mathscr W$ contains no submodules isomorphic to $J_0$ and different from
$J_0$. This implies $M\leq \mathscr W$ and, in particular, $\mathscr N \cap
\mathscr W\neq 0$.

Let us take an element $s \in S^*\setminus W^* \subseteq \mathscr K$. As $W^*$
and $T_0^*$ are complements in $U^*$, $s\neq 0$ has the unique decomposition
$s=w+t$ with $0\neq w\in W^*$ and $0\neq t\in T_1^*$. Furthermore, for $0\neq
j=\psi^{-1}(t) \in J_0$, $j+t\in \mathscr T_\psi \leq \mathscr H$ holds. We
claim that $j+t\in \mathscr N$. Indeed, we have the decomposition
\[w=(s+j)+(j+t), \hskip 1cm s+j \in \mathscr K, j+t\in \mathscr H.\]
If $\mathscr N\leq U^*$ then $M=W^*$ and $\alpha(s+j)=\alpha(j+t)\in \mathscr
K^\sharp \cap \mathscr H^\sharp = 1$. In particular, $j+t\in \mathscr N$. If
$\mathscr N\nleq U^*$ then for an arbitrary element $n\in (\mathscr N\cap
\mathscr W)\setminus U^*$, $w+n \in \mathscr K$. This means that the element
$\alpha(w)$ has two $\mathscr H^\sharp \mathscr K^\sharp$ decompositions:
\[\alpha(w)=\alpha(w+n)+0=\alpha(s+j)+\alpha(j+t).\]
This is only possible if $j+t\in \mathscr N$, thus our claim is proved.

Let $M'$ be the $S_5$-submodule generated by $j+t$, then $J_0 \leq M'\leq
\mathscr N$ as the irreducible $J_0$ is not $S_5$-isomorphic to a submodule of
$U^*$. This contradiction proves the simplicity of $\mathscr Q$. 
\end{proof}

\begin{remark}
We have seen that there are at least two possibilities for the choice of
$\mathscr H$. Also in $\widetilde{\mathscr W}$, we can replace any minimal
submodule $W^{**}$ by an appropriate $S^{**}$. This shows
that there are \textit{many} Bol loops of exponent $2$ which live in the same
non-solvable group. Many of these loops are simple. Using computer
calculations, we were able to construct over $30$ nonisomorphic simple Bol loops
of exponent $2$ in $\mathscr G$ in the case $k=1$. 
\end{remark}

In fact, this phenomena in not unusual for Bol loops of exponent $2$. In
\cite[Section 5]{KiechleNagyG} and \cite[Theorem 5.5]{NagyGfrattini}, the
authors constructed rich classes of Bol loops of exponent $2$ having the same
enveloping groups, namely the wreath product $C_2^n \wr C_2$ and the
extraspecial $2$-group $E_{2^{2n+1}}^+$, respectively. In these cases, a simple
parametrization of the conjugacy classes of involutions enabled a description of
the associated loops. Unfortunately, the group $\mathscr G$ has many conjugacy
classes of involutions and these classes have no nice algebraic
parametrization. Therefore, we see no way of classifying all simple Bol loops
with enveloping group $\mathscr G$. 

The above remark lets us make another observation. While the class of finite Bol
loops of exponent $2$ is very rich, the structure of the right multiplication
group of a Bol loop of exponent $2$ is rather restricted. Differently speaking,
while the classification of finite simple Bol loops of exponent $2$ seems to be
hopeless, we think that the classification of right multiplication groups of
such loops could be a meaningful project.

\medskip 

We finish this paper with the following 

\begin{question}
Classify those almost simple groups $T$ for which an exponent $2$ Bol loop
folder $(G,H,K)$ exists such that $T\cong G/O_2(G)$.
\end{question}

\bibliographystyle{plain}

\begin{thebibliography}{99}

\bibitem{Alperin} 
\textsc{J. L. Alperin}. \textit{Local representation theory.} Cambridge
University Press, New York, 1993. 

\bibitem{FGT} 
\textsc{M. Aschbacher}. \textit{Finite group theory.} Cambridge Studies in
Advanced Mathematics, 10. Cambridge University Press, Cambridge, 1986.

\bibitem{Aschbacher1} 
\textsc{M. Aschbacher}. On Bol loops of exponent 2. \textit{J. Algebra}
\textbf{288,} No. 1, 99-136 (2005).

\bibitem{AschbKinyPhil} 
\textsc{M. Aschbacher}, \textsc{M. K. Kinyon} and \textsc{J. D. Phillips}.
Finite Bruck loops. \textit{Trans. Am. Math. Soc.} \textbf{358,} No. 7,
3061-3075 (2005).


\bibitem{BaumeisterStein} 
\textsc{B. Baumeister} and \textsc{A. Stein}. Private communication (2007). 

\bibitem{Bruck} 
\textsc{R. H. Bruck}. \textit{A survey of binary systems.} Springer-Verlag,
Berlin, 1958.

\bibitem{Burn} 
\textsc{R. P. Burn}. Finite {Bol} loops {I}. \textit{Math. Proc. Cambridge
Philos. Soc.}, {\bf 84}(3) (1978), {53--66}.

\bibitem{GAP}
The GAP Group, \textit{GAP -- Groups, Algorithms, and Programming, Version
4.4.9}; 2006. \texttt{http://www.gap-system.org}

\bibitem{Heiss}
\textsc{S. Heiss}. Self-invariant 1-factorizations of complete graphs, submitted
for publication.

\bibitem{Huppert}
\textsc{B. Huppert}. \textit{Endliche Gruppen I}, Springer-Verlag,
Berlin, Heidelberg, 1967.

\bibitem{Kiechle}
\textsc{H. Kiechle}. \textit{Theory of K-loops}. Lecture Notes in Mathematics.
1778. Berlin: Springer. (2002).

\bibitem{KiechleNagyG}
\textsc{H. Kiechle} and \textsc{G. P. Nagy}. On the extension of involutorial
Bol loops. \textit{Abh. Math. Semin. Univ. Hamb.} 72, 235-250 (2002).

\bibitem{KolbKreuzer}
\textsc{E. Kolb} and \textsc{A. Kreuzer}.
Geometry of kinematic $K$-loops. \textit{Abh. Math. Sem. Univ. Hamburg} 65
(1995), 189--197. 

\bibitem{Mortimer} 
\textsc{B. Mortimer}. The modular permutation representations of the known
doubly transitive groups. \textit{Proc. London Math. Soc.} (3) 41 (1980), 1-20.

\bibitem{NagyGsol} 
\textsc{G. P. Nagy}. Solvability of universal {B}ol $2$-loops. \textit{Comm.
Algebra}, {\bf 26(2)} (1998), 549--555.

\bibitem{NagyGfrattini}
\textsc{G. P. Nagy}. On the structure and number of small Frattini Bol 2-loops.
\textit{Math. Proc. Camb. Philos. Soc. 141}, No. 3, 409-419 (2006). 

\bibitem{NagyGsimplebol} 
\textsc{G. P. Nagy}. A class of simple proper Bol loops. Submitted for
publication. Download from \texttt{http://arxiv.org/abs/math.GR/0703919}. 2007.

\bibitem{Steinberg}
\textsc{R. Steinberg}. Endomorphisms of linear algebraic groups.  Memoirs of the
American Mathematical Society, No. 80 American Mathematical Society, Providence,
R.I. 1968.

\end{thebibliography}

\end{document}